\theoremstyle{plain}
\newtheorem{thm}{\protect\theoremname}[section]
  \theoremstyle{plain}
  \newtheorem{fact}[thm]{\protect\factname}
  \theoremstyle{definition}
  \newtheorem{defn}[thm]{\protect\definitionname}
  \newtheorem{nota}[thm]{\protect\notationname}
  \theoremstyle{plain}
  \newtheorem{lem}[thm]{\protect\lemmaname}
  \theoremstyle{plain}
  \newtheorem{prop}[thm]{\protect\propositionname}
  \providecommand{\definitionname}{Definition}
  \providecommand{\notationname}{Notation}
  \providecommand{\factname}{Fact}
  \providecommand{\lemmaname}{Lemma}
  \providecommand{\propositionname}{Proposition}
\providecommand{\theoremname}{Theorem}
 \newtheoremstyle{named}{}{}{\itshape}{}{\bfseries}{.}{.5em}{\thmnote{#3}}
 \theoremstyle{named}
 \newtheorem*{namedtheorem}{Theorem}
\providecommand{\dotdiv}{
  \mathbin{
    \vphantom{+}
    \text{
      \mathsurround=0pt 
      \ooalign{
        \noalign{\kern-.35ex}
        \hidewidth$\smash{\cdot}$\hidewidth\cr 
        \noalign{\kern.35ex}
        $-$\cr 
      }%
    }%
  }%
}
\newcommand{\Lc}{\mathcal{L}}
\newcommand{\sforall}{\ensuremath\forall\mkern-7.2mu\forall}
\newcommand{\wexists}{\text{\Large\textschwa}}
\newcommand{\frk}{\mathfrak}
\newcommand{\D}{\mathsf{D}^{\geq 1}}
\newcommand{\DS}{\mathsf{D}^{= 1}}
\begin{document}


\title{Metric Spaces Are Universal for Bi-interpretation with Metric Structures}

\author{James Hanson}
\email{jehanson2@wisc.edu}
\address{Department of Mathematics, University of Wisconsin--Madison, 480 Lincoln Dr., Madison, WI 53706}
\date{\today}

\keywords{metric structures, continuous logic, bi-interpretation, computable structure theory}
\subjclass[2020]{03C66, 03C57}

\begin{abstract}
  In the context of metric structures introduced by Ben Yaacov, Berenstein, Henson, and Usvyatsov
  \cite{MTFMS}, we exhibit an explicit encoding of metric structures in countable signatures as pure metric spaces in the empty signature, showing that such structures are universal for bi-interpretation among metric structures with positive diameter. This is analogous to the classical encoding of arbitrary discrete structures in finite signatures as graphs, but is stronger in certain ways and weaker in others. There are also certain fine grained topological concerns with no analog in the discrete setting.
\end{abstract}

\maketitle


\section{Introduction}
\noindent It is a well known fact \cite{hodges_1993}  
 that any discrete structure with finite signature can be encoded as a graph in a particularly strong way:
\begin{fact} \label{fact:main-fact}
For any finite signature $\mathcal{L}$ there is a sentence $\chi$ in a language with a single binary predicate such that every model of $\chi$ is a graph and the class of models of $\chi$ is bi-interpretable with the class of $\mathcal{L}$-structures with more than one element. Furthermore this bi-interpretation preserves embeddings and is computable in the sense that presentations of models of $\chi$ are uniformly computable from presentations of the corresponding $\mathcal{L}$-structure and vice versa.
\end{fact}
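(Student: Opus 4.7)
The plan is to define an explicit functor $M\mapsto G(M)$ from $\mathcal{L}$-structures (with more than one element) to graphs by decorating $M$ with finite rigid ``gadgets'' that encode every symbol of $\mathcal{L}$, then axiomatize the image by a single sentence $\chi$ and exhibit first-order interpretations in each direction.

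Concretely, enumerate the symbols $s_1,\dots,s_N$ of $\mathcal{L}$, let $k$ be the maximum arity appearing in $\mathcal{L}$, and pick $N+k+1$ pairwise non-isomorphic rigid finite connected graphs, each with a canonical distinguished vertex: a ``domain tag'' $T_0$, one ``symbol tag'' $T_i$ per $s_i$, and one ``position tag'' $P_j$ per $1\le j\le k$. (For example, take paths of pairwise distinct lengths, each carrying a small identifying appendage that pins down a unique distinguished vertex.) The vertex set of $G(M)$ is the disjoint union of $|M|$, a set of auxiliary ``record'' vertices (one per tuple in a relation, value of a function, or named constant), and the vertices of the attached tag copies. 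Every $a\in M$ carries a copy of $T_0$ glued at its distinguished vertex; for each $n$-ary relation symbol $s_i$ and each tuple $\vec a\in s_i^M$ we add a record vertex tagged by a copy of $T_i$ and joined, via fresh copies of $P_1,\dots,P_n$, to $a_1,\dots,a_n$; functions are encoded by their graphs and constants by a $T_i$-tag attached to the named element.

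The sentence $\chi$ states the shape of this construction: every vertex belongs to a unique gadget type and every tag appears only in the prescribed configurations, each record vertex is linked via the correct position tags, function records define total single-valued functions, constant records appear exactly once, and no stray edges occur. The interpretation of $M$ in $G(M)$ takes the universe to be the $T_0$-tagged vertices and reads off each symbol from the presence of an appropriate record gadget; the interpretation of $G(M)$ in $M$ proceeds inside $M\sqcup M^{<\omega}$, with each vertex type given by a definable representative and the edge relation defined by cases on types. Rigidity of the tags ensures that graph isomorphisms respect vertex types, so both round-trip compositions are definable isomorphisms; preservation of embeddings is immediate from the functoriality of the construction, and uniform computability is clear from the explicitness of the encoding. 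The main bookkeeping hurdle is checking that $\chi$ truly characterizes the image up to isomorphism, and the hypothesis $|M|>1$ is used to rule out small degeneracies (such as a constant-record needing to be disjoint from domain vertices or from other records).
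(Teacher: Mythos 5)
Your proposal is the standard ``tag construction,'' which is exactly the proof the paper has in mind for this fact: the paper does not prove it but cites Hodges and describes precisely this method of attaching rigid finite tag gadgets to elements and to record vertices for each tuple in each relation. The sketch is correct as far as it goes; the only points to tighten are that the reverse interpretation must live in a single fixed finite power $M^{n}$ rather than $M\sqcup M^{<\omega}$ (possible since the signature is finite and the gadgets have bounded size, using $|M|>1$ so that $|M^{n}|$ is large enough to accommodate all vertices of $G(M)$ --- this cardinality obstruction, not the degeneracies you mention, is the essential reason the one-element structures must be excluded), and that one must verify that graph embeddings between models of $\chi$ preserve gadget types so that embeddings are reflected as well as preserved.
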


This immediately implies that the set of tautologies involving a single binary predicate is undecidable, even though monadic first-order logic, which involves only unary predicates, is decidable. This is in contrast to the situation in continuous first-order logic, introduced in \cite{MTFMS}. There is an easy encoding of a graph $(V,E)$ as a metric space $(V,d)$
wherein 
\[
d(x,y)=\left\{ \begin{array}{cc}
0 & x=y\\
\frac{1}{2} & xEy\\
1 & \text{otherwise}
\end{array}\right..
\]
So the set of continuous tautologies in the empty signature is undecidable for any reasonable notion of computable continuous formulas. Moreover, discrete structures can be encoded as metric spaces, in light of Fact \ref{fact:main-fact}.

The proof of Fact \ref{fact:main-fact} uses a `tag construction,' in which each tuple $x_0,x_1,\allowbreak ...,\allowbreak x_{k-1}$ related by some relation $P$ is connected by a tag which is engineered to distinguish each $x_i$ and to be distinguishable from tags corresponding to relations other than~$P$.

The `tag construction' does not generalize in any satisfactory way to metric structures. Nevertheless, we are able to prove a generalization of Fact~\ref{fact:main-fact}---namely our main result, Theorem~\ref{thm:main2}---using a more intricate construction. 
The full statement of Theorem~\ref{thm:main2} is somewhat technical, but we can summarize the important part in the following.

\begin{namedtheorem}[Summary of Theorem~\ref{thm:main2}] 
For any countable metric signature $\mathcal{L}$ and $r>0$, there is a theory $T$ in the empty signature such that the class of models of $T$ is bi-interpretable with the class of $\mathcal{L}$-structures with diameter $\geq r$. This bi-interpretation preserves embeddings and $d$-finiteness of types. If the original structure is not strongly infinite dimensional, then the interpreted structure will also not be strongly infinite dimensional. Furthermore, the bi-interpretation is computable in the sense that presentations of models of $T$ are uniformly computable from presentations of the corresponding $\mathcal{L}$-structure and vice versa.
\end{namedtheorem}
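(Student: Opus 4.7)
\emph{Proof plan.} The strategy is to encode each $\mathcal{L}$-structure $M$ as an enlarged metric space $\widetilde{M}$ that contains a rescaled isometric copy of $M$ as a definable ``home'' subset, together with auxiliary code points encoding the values of the function and predicate symbols of $\mathcal{L}$. After rescaling the metric on $M$ so that its distances lie in $[0,r]$, code points are attached at distances strictly greater than $r$ from the home subset, so that the home subset is precisely the set of points at pairwise distance at most $r$ and can be isolated by a continuous formula.

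For each symbol $\sigma \in \mathcal{L}$ I would fix a small rigid ``label gadget'' at a chosen distance from a global hub, with the labels for different $\sigma$ placed at a countable separated sequence of distances. For each tuple $\bar{a}\in M^n$ and each predicate symbol $P$ of arity $n$, attach a code point $w_{P,\bar{a}}$ whose distance to each $a_i$ encodes the coordinate index $i$, whose distance to the label gadget for $P$ identifies the symbol, and whose distance to one fixed reference point varies continuously with $P(\bar{a})$. Function symbols are handled similarly: if $f(\bar{a})=b$ then a code point links $\bar{a}$ and $b$ to the label gadget for $f$. The moduli of continuity of the symbols of $\mathcal{L}$ translate into Lipschitz-type constraints on these distances, and the resulting $\widetilde{M}$ is a pure metric space in the empty signature.

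The main obstacle is pinning down the theory $T$ and the decoding formulas so that \emph{every} model of $T$ is of the form $\widetilde{M}$ for some $\mathcal{L}$-structure $M$, with $M$ recoverable up to isomorphism by explicit continuous formulas. In the discrete setting of Fact~\ref{fact:main-fact}, tags can be made arbitrarily rigid graphs that either are or are not of a given type; in continuous logic everything deforms, and any small perturbation of $\widetilde{M}$ must still decode to a genuine metric $\mathcal{L}$-structure. The gadgets therefore have to be engineered so that the distance ranges used to encode the value $P(\bar{a})$, the coordinate index $i$, and the symbol label $P$ are pairwise disjoint and sufficiently separated that a perturbation only moves each within its own range. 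This in turn forces the countably many symbol labels to accumulate in a controlled way, which is where the hypothesis $r>0$ and the freedom to rescale are used.

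Once encoding and decoding are set up, the remaining clauses follow with modest bookkeeping. Preservation of embeddings is immediate because each gadget is attached locally to its base tuple and carries no further information. Preservation of $d$-finiteness of types and the clause on strongly infinite dimensional structures both reduce to showing that the type of any code point over $M$ is determined up to small error by the distances to a finite tuple of home and label points, so that the ``dimension'' of the new type spaces is controlled by that of the original. Uniform computability is manifest from the explicit construction: given a presentation of $M$ one enumerates tuples and attaches gadgets, and conversely the decoding formulas produce the predicate and function values directly from the metric.
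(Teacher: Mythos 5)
Your plan is, in essence, the tag construction from the discrete case, and this is precisely the approach that does not survive the passage to metric structures. The central failure is in the clause ``a code point $w_{P,\bar{a}}$ whose distance to each $a_i$ encodes the coordinate index $i$.'' If the distances $d(w_{P,\bar{a}},a_i)$ are to decode the index $i$, they must take values in disjoint ranges for distinct $i$; but the triangle inequality forces $|d(w,a_i)-d(w,a_j)|\leq d(a_i,a_j)$, so the scheme breaks down whenever two entries of the tuple are close together, and is outright contradictory on the diagonal $a_i=a_j$. In a discrete structure distinct elements are uniformly separated, which is why tags work there; in a metric structure they are not, and no placement of a single gadget relative to the tuple can recover the coordinate roles. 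A second, independent gap: any quantity you encode as a distance (the value $P(\bar{a})$, say, read off as $d(w_{P,\bar{a}},\mathrm{ref})$) is automatically $1$-Lipschitz in the metric you place on the code points, so you must first remetrize so that every predicate of $\mathcal{L}$ becomes Lipschitz---and do so computably, since the obvious inf-convolution trick costs a jump. You do not address this, and it cannot be waved away as bookkeeping. Third, placing the countably many symbol labels at ``a countable separated sequence of distances'' runs into compactness: elementary extensions will contain code points sitting at accumulation values of those distances, which decode to nothing; the separation you want for decoding is exactly what produces these unsorted elements.

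For comparison, the construction actually used proceeds by first normalizing the signature: reduce to binary relational predicates via product sorts, collapse the countably many sorts into one via a countable metric disjoint union whose pieces shrink toward a single overflow point (so that the compactness overflow is a single harmless $\varnothing$-definable point), and remetrize by a computable concave majorant of a common modulus of continuity so that all predicates become $1$-Lipschitz. Then, instead of per-tuple gadgets, the encoding attaches to the universe $A$ a tower of isometric copies $A\times\{n\}$ scaled by $2^{-n}$ and records the binary predicate $P_n$ in the cross-distances $d((x,n),(y,n+1))=2^{-n-1}(1+P_n(x,y))$; the triangle inequality for these cross-distances is exactly the $1$-Lipschitz condition on $P_n$, and a single distant tag point $t$ makes $A$ and each $A\times\{n\}$ definable. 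Your proposal would need to be rebuilt along these lines---in particular, replacing per-tuple code points by whole definable copies of (products of) the universe---before the decoding, the axiomatization of $T$, and the preservation claims can be carried out.
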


 There are some improvements in Theorem \ref{thm:main2} over Fact \ref{fact:main-fact}, namely that the encoding works in the empty signature\textemdash which is largely cosmetic\textemdash and that we can encode countable signatures rather than just finite ones.
\emph{$d$-finiteness of types} is a technical niceness condition introduced in \cite{Yaacov2007} that will be discussed below. Strong infinite dimensionality is relevant from the point of view of computable structure theory, as the continuous degree of a point in a finite dimensional or weakly infinite dimensional metric space is always total \cite{2014arXiv1405.6866K}. These two concepts play no essential role in the construction, although they do motivate a particular choice in it, namely using a disjoint union construction rather than a product construction.

The restriction that the metric structures have diameter uniformly bounded below is the necessary analog of the `more than one element' restriction. A simple compactness argument shows that we could never have uniform bi-interpretability between a single elementary class of metric spaces and the class of all $\mathcal{L}$-structures of positive diameter. In both the discrete case and the metric case we could avoid this non-uniformity by appending a new sort to every structure that always contains precisely two elements distance 1 apart. Also it should be noted that this is a non-issue from a computable structure theory point of view, since a one point structure is clearly computable.

Finally there is the issue of finite axiomatizability, which the generalization loses. As will be discussed at the end of the paper in Section \ref{sec:fin}, however, there is no clear analog of finite axiomatizability in continuous logic.




\section{Preliminaries}

\noindent In the interest of notational brevity, we will describe one step of
the bi-interpretation informally before defining the concept of a
metric signature rigorously: 
\begin{fact}
Every many-sorted metric signature can be recast as a purely relational
metric signature with $[0,1]$-valued predicates and metrics.
\end{fact}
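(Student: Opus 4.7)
The proof would proceed in two independent stages: first rescale every metric and every predicate symbol to take values in $[0,1]$, and then eliminate all function and constant symbols by replacing them with relational encodings of their graphs.

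For the rescaling stage, recall that by definition of a metric signature each sort $S$ is equipped with a bounded metric of some diameter $M_S < \infty$, and each predicate symbol $P$ is assigned a bounded range $[a_P,b_P]$. Replacing $d_S$ by $d_S / M_S$ and $P$ by $(P - a_P)/(b_P - a_P)$, and correspondingly rescaling each uniform continuity modulus by the appropriate linear factor, produces an equivalent signature in which every metric and every predicate takes values in $[0,1]$.

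To eliminate function symbols, I would replace each $n$-ary function symbol $f : S_1 \times \cdots \times S_n \to S$ by an $(n+1)$-ary predicate symbol $P_f$, interpreted in any structure as $P_f(\bar{x}, y) := d_S(f(\bar{x}), y)$. A uniform continuity modulus for $P_f$ is obtained from the triangle inequality
\[
|d_S(f(\bar{x}), y) - d_S(f(\bar{x}'), y')| \leq d_S(f(\bar{x}), f(\bar{x}')) + d_S(y, y'),
\]
by combining the given modulus of $f$ in each of its arguments with the identity modulus in the new argument $y$. Constant symbols are handled as the $0$-ary case of this construction. Because $d_S$ now takes values in $[0,1]$, so does each $P_f$.

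Finally I would verify that the recasting is faithful: any original function symbol $f$ is recoverable from $P_f$, since $y = f(\bar{x})$ is the unique element of the zero set of $P_f(\bar{x}, \cdot\,)$ (existence from $P_f(\bar{x}, f(\bar{x})) = 0$, uniqueness from definiteness of $d_S$), and this definition is uniform across structures. I expect the only real obstacle to be the bookkeeping of continuity moduli across the two stages, but nothing genuinely difficult arises, which is presumably why the assertion is stated as a fact rather than a theorem.
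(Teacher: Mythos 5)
Your argument is correct and is exactly the standard recasting the paper has in mind (the paper states this as a Fact without proof): rescale metrics and predicate ranges into $[0,1]$, then replace each function symbol $f$ by the predicate $d_S(f(\bar{x}),y)$ with the modulus of uniform continuity you describe. The only caveat, which the paper itself flags as a ``trivial subtlety,'' is that your rescaling $(P-a_P)/(b_P-a_P)$ degenerates when $b_P=a_P$, so zero-length ranges need a separate (trivial) convention.
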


From now on all predicate symbols will be $[0,1]$-valued and in particular all sorts will have diameter $\leq 1$.

There are some trivial subtleties if we allow ourselves predicates
with zero-length ranges or other such bookkeeping edge  
cases, but I trust that anyone dedicated enough to include those in
their formalism will be more than capable of resolving those issues
on their own. Normally the task of setting out the bookkeeping for
many-sorted structures is similarly relegated, but for our purposes
here it will be prudent to consider it immediately.

For computable metric signatures, obviously we should require that
the predicate ranges and maximum sort diameters be uniformly computable
before recasting in the form above (although really all we need are
uniformly computable upper and lower bounds), in order to ensure that
we can uniformly compute presentations of recast structures from
presentations of the original structures.
\begin{defn}
  \begin{enumerate}[label=(\roman*)]
  \item  A \emph{metric signature} $\mathcal{L}$, is a tuple $(\mathcal{S},\mathcal{P},a,\Delta)$,
where
\begin{itemize}
\item $\mathcal{S}$ is a set of sort symbols;
\item $\mathcal{P}$ is a set of predicate symbols;
\item $a:\mathcal{P}\rightarrow\mathcal{S}^{<\omega}$ is the arity function
that assigns to each predicate symbol its finite string of input sorts
(by an abuse of notation, we will use $a$ for formulas as well as
atomic predicates); and
\item for each predicate symbol $P$, $\Delta_{P}:[0,1]\rightarrow[0,1]$
is the syntactic modulus of uniform continuity of $P$.
\end{itemize}
\item A \emph{computable metric signature} is a metric signature such that
$\mathcal{S}$ and $\mathcal{P}$ are computable subsets of $\omega$,
$a$ is a computable function which is total on $\mathcal{P}$, and
$P\mapsto\Delta_{P}$ is a uniformly computable family of total computable
functions.
  \end{enumerate}
\end{defn}

Although in full generality moduli of uniform continuity can be specified
as functions of each variable individually, on the level of a metric
signature not much is gained by such a generalization. Likewise there
is no particular reason for moduli of uniform continuity to be continuous
anywhere other than $0$, but again very little is gained and continuity
is a more natural convention in the context of computable metric signatures.

The phrase `syntactic modulus of uniform continuity' refers to the
fact that in a given $\mathcal{L}$-structure the corresponding predicate
may obey a stricter modulus of uniform continuity.

The definitions of restricted $\mathcal{L}$-formulas, $\mathcal{L}$-structures,
and other such things is given in \cite{MTFMS}.

We should be clear about what a computable metric structure is.
\begin{defn}
Given a computable metric signature $\mathcal{L}$, a \emph{computable $\mathcal{L}$-structure}
is an $\mathcal{L}$-structure whose universes are a uniformly computable
family of computable metric spaces (in the sense of \cite{Weihrauch:2000:CAI:358357}) and whose
predicate interpretations are all uniformly computable functions.
\end{defn}

Finally we will need a syntactically uniform notion of definable predicate,
similar to the one given in \cite{10.2307/25747120}.
\begin{defn}

  \begin{enumerate}[label=(\roman*)]
  \item For a metric signature $\mathcal{L}$, a \emph{finitary $\mathcal{L}$-formula}
is an expression of the form $\sum_{n<\omega}2^{-(n+1)}\varphi_{n}$,
with $\varphi_{n}$ a sequence of $[0,1]$-valued restricted $\mathcal{L}$-formulas
such that the entire sequence contains finitely many free variables. Such a formula has a syntactic
modulus of uniform continuity of $\sum_{n<\omega}2^{-(n+1)}\Delta_{\varphi_{n}}$.
\item  An \emph{$\omega$-infinitary $\mathcal{L}$-formula} is an expression
of the same form allowing possibly infinitely many free variables.\footnote{Such an expression has a uniformly computable syntactic
modulus of uniform continuity in terms of the appropriate metric on
$\omega$-tuples, but it is somewhat more complicated to state.}
\item  An \emph{$\mathcal{L}$-formula} is either a finitary or an $\omega$-infinitary
$\mathcal{L}$-formula.
\item  A \emph{computable $\mathcal{L}$-formula} is an $\mathcal{L}$-formula such that the sequence of formulas $\varphi_n$ is computable (the $\varphi_n$ are required to be restricted formulas and can therefore be encoded by natural numbers).
  \end{enumerate}

\end{defn}

In \cite{MTFMS}, definable predicates are defined relative to a single metric structure or an elementary class of metric structures in terms of uniformly convergent limits of restricted formulas, but the notion of formula given here, which is alluded to as a possibility in \cite{MTFMS}, is purely syntactic and can be interpreted in any $\mathcal{L}$-structure.

With some straightforward work, one can check that any definable
predicate in the typical
sense can be written in this form and that the resulting family of
formulas is closed under this `infinitary connective,' as well as all ordinary connectives, up to logical
equivalence. In particular, even though continuous logic in some sense has an infinitary conjunction, it does not have a tall hierarchy of infinitary formulas the same way that $\mathcal{L}_{\omega_1 \omega}$ does. Furthermore, one can show that this is computably true as well.

  \begin{fact}
    If $\{\varphi_n\}_{n<m}$ is a uniformly computable sequence of $\Lc$-formulas for some $m \leq \omega$ and $F:[0,1]^m \to [0,1]$ is a computable function, then $F(\bar{\varphi})$ is logically equivalent to a computable $\Lc$-formula. Furthermore the equivalent formula is uniformly computable in $\{\varphi_n\}_{n<m}$ and $F$.
  \end{fact}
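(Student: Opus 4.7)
The plan has four steps: uniformly approximate $F$ by restricted connectives, truncate each input $\varphi_n$ to a restricted formula, combine the two to obtain restricted-formula approximants $H_j$ of $F(\bar\varphi)$, and then convert the uniformly convergent sequence $\{H_j\}$ into the canonical form $\sum_n 2^{-(n+1)} \chi_n$.

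For the first three steps: since $F\colon[0,1]^m\to[0,1]$ is computable, it has a computable modulus of uniform continuity and, uniformly in $j$, a restricted connective $F_j$ with $\|F-F_j\|_\infty\le 2^{-(j+3)}$; if $m=\omega$, uniform continuity on the product space forces each $F_j$ to depend on only finitely many coordinates. Writing $\varphi_n=\sum_k 2^{-(k+1)}\psi_{n,k}$ with $\{\psi_{n,k}\}$ uniformly computable restricted formulas, the truncation $\tilde\varphi_n^K:=\sum_{k<K}2^{-(k+1)}\psi_{n,k}$ is itself a restricted formula (being a rational linear combination of restricted formulas) with $|\varphi_n-\tilde\varphi_n^K|\le 2^{-K}$. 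Choosing $K_j$ computably and large enough (via the modulus of $F$), the restricted formula $H_j:=F_j(\tilde\varphi_0^{K_j},\ldots,\tilde\varphi_{m_j-1}^{K_j})$ is uniformly computable from the inputs and satisfies $|H_j-F(\bar\varphi)|\le 2^{-(j+2)}$ in every $\Lc$-structure.

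The real work is the fourth step: given uniformly computable restricted $H_j$ converging uniformly to $L:=F(\bar\varphi)$, produce uniformly computable restricted $\chi_n\in[0,1]$ with $L=\sum_n 2^{-(n+1)}\chi_n$. I would proceed via a telescoping identity $L=H_0+\sum_j(H_{j+1}-H_j)$, decomposing each signed increment using $\dotdiv$ as $H_{j+1}-H_j=(H_{j+1}\dotdiv H_j)-(H_j\dotdiv H_{j+1})$, the difference of two $[0,2^{-(j+1)}]$-valued restricted formulas. Rescaling these by iterated truncated addition gives $[0,1]$-valued restricted formulas $D_j^\pm$ with $H_{j+1}-H_j=2^{-(j+1)}(D_j^+-D_j^-)$, yielding $L=H_0+\sum_j 2^{-(j+1)}(D_j^+-D_j^-)$. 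A careful double-indexing argument—exploiting that the total weight $\sum_{n,j}2^{-(n+j+2)}$ balances to $1$—then interleaves $H_0$ together with the positive and negative series across the slots $n\in\omega$, producing a single canonical series whose per-slot contents can be assembled from $\min$, $\max$, $\dotdiv$, rational constants, and truncated addition into $[0,1]$-valued restricted formulas $\chi_n$.

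The principal obstacle lies in this final reindexing-and-assembly step: restricted connectives in continuous logic do not include unrestricted subtraction, so one must carefully orchestrate the construction of $\chi_n$ so that both the $[0,1]$ range constraint is maintained and the partial sums converge \emph{exactly}, not just approximately, to $L$ in every $\Lc$-structure, uniformly in the computable input data. This bookkeeping is nontrivial but is the effective analogue of the ``straightforward'' closure property the author already invoked in the preceding paragraph, and all intermediate constructions (the $F_j$, $K_j$, $H_j$, $D_j^\pm$, and the reindexing scheme) are uniformly computable by inspection.
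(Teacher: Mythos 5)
The paper states this Fact without proof (it is bundled into the ``straightforward work'' remark just above it), so there is nothing to compare against except the argument's own merits. Your first three steps are correct and uniformly effective: a computable $F$ has a computable modulus and hence uniformly computable restricted-connective approximants $F_j$ (with the $m=\omega$ case handled by the observation that uniform continuity for $\sup_i 2^{-i}d_i$ forces $\varepsilon$-dependence on finitely many coordinates); the truncations $\tilde\varphi_n^K$ are restricted because a finite dyadic combination with coefficient sum $<1$ never triggers truncated addition; and the resulting $H_j$ satisfy $|H_j - F(\bar\varphi)|\le 2^{-(j+2)}$ in every structure.

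The gap is exactly where you locate it, and the route you sketch through it would not work as described. Splitting each increment as $(H_{j+1}\dotdiv H_j)-(H_j\dotdiv H_{j+1})$ leaves a signed series, and no static reindexing or interleaving of the positive and negative parts can yield $\sum_n 2^{-(n+1)}\chi_n$ with $\chi_n\in[0,1]$: the partial sums of any such target series increase monotonically to $L$, whereas your telescoped partial sums are the $H_N$, which may oscillate around $L$. Absorbing a deficit $-2^{-(j+1)}D_j^-$ requires borrowing against the unspent weight of later slots, and that carrying mechanism \emph{is} the proof; the identity $\sum_{n,j}2^{-(n+j+2)}=1$ does not supply it. The fix is to drop the $D_j^{\pm}$ decomposition and extract the ``digits'' greedily from the $H_j$ themselves (the forced-limit device): defining recursively
\[
\chi_n \;=\; \min\Bigl\{1,\ 2^{n+1}\Bigl(\bigl(H_{n}\dotdiv \textstyle\sum_{k<n}2^{-(k+1)}\chi_k\bigr)\dotdiv 2^{-(n+2)}\Bigr)\Bigr\},
\]
which is a restricted formula (a $\dotdiv$ applied to restricted formulas, followed by $n+1$ truncated doublings), one checks from $|H_n-L|\le 2^{-(n+2)}$ that the remainder $R_n=L-\sum_{k<n}2^{-(k+1)}\chi_k$ satisfies $0\le R_{n+1}\le 2^{-(n+1)}$ by induction, so $\sum_n 2^{-(n+1)}\chi_n=L$ exactly in every structure, and the whole construction is uniformly computable. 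With this replacing your fourth step, your steps one through three assemble into a complete proof.
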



\subsection{Closed and Open Formulas and Definability Quantifiers}

There are many real valued sentences in this paper which are meant to capture an intuitive notion (such as being a bijection). In the interest of making this intuition clear, we will use a notation that mimics ordinary first-order logic as closely as possible but does not change the meaning of any established logical symbols. There are precedents for this kind of notation in the precursors of continuous logic, and there are many instances of continuous logicians slipping into something similar informally in the literature.

In order to do this without modifying the meaning of any existing logical symbols, we will need two new quantifiers.

\begin{defn}
  A structure $\frk{M}$ satisfies $\sforall x \varphi(x)$, written $\frk{M} \models \sforall x \varphi(x)$ if for every elementary extension $\frk{N} \succeq \frk{M}$ and every $a \in \frk{N}$, $\frk{N} \models \varphi(a)$. This is called \emph{strong universal quantification}.

  A structure $\frk{N}$ satisfies $\wexists x\varphi(x)$, written $\frk{M} \models \wexists x \varphi(x)$ if for some elementary extension $\frk{N} \succeq \frk{M}$ and some $a \in \frk{N}$, $\frk{N} \models \varphi(a)$. This is called \emph{weak existential quantification}.
\end{defn}

Recall that a \emph{condition} is an equality or inequality involving two real valued formulas (often with one of them a constant). Conditions involving $=$, $\leq$, and $\geq$ are \emph{closed} and conditions involving $\neq$, $>$, and $<$ are \emph{open}. These correspond to closed and open subsets of type space.\footnote{Although in general not all closed or open subsets of type space are of this form. Closed conditions correspond precisely to closed $G_\delta$ subsets of type space, and open conditions correspond precisely to open $F_\sigma$ subsets of type space.}

\begin{defn}
  The classes of \emph{closed} and \emph{open formulas} are defined inductively.
  \begin{itemize}
  \item Any closed condition is a closed formula.
  \item Any open condition is an open formula.
  \end{itemize}
  If $F$ and $G$ are closed formulas and $U$ and $V$ are open formulas, then
  \begin{itemize}
  \item $F \wedge G$, $F \vee G$, $U \to F$, and $\neg U$ are closed formulas,
  \item $U \wedge V$, $U \vee V$, $F \to U$, and $\neg F$ are open formulas,
  \item $\forall x F$ and $\wexists x F$ are closed formulas, and
  \item $\sforall x U$ and $\wexists x U$ are open formulas.
  \end{itemize}
\end{defn}

Satisfaction of closed and open formulas is defined in the obvious way, as is the notion of free variables. It is not hard to show that every closed (resp.\ open) formula is logically equivalent to a closed (resp.\ open) condition, and that this is witnessed by an explicit computable mapping. From this it follows that the set of types satisfying a closed (resp.\ open) formula is topologically closed (resp.\ open), justifying the name. We will use lowercase Greek letters for real valued formulas and uppercase Roman letters for closed and open formulas.

\begin{nota}
  We may use $x=y$ as shorthand for the closed formula $d(x,y) = 0$.
\end{nota}

\begin{nota}
  If $\varphi(\bar{x},y)$ is a real valued formula, then we write $\D y\varphi(\bar{x},y)$ for
\begin{align*}
  \forall y \left[ \wexists z (\varphi(\bar{x},z) = 0 \wedge d(y,z)=\varphi(\bar{x},y)) 
               \wedge \forall z( \varphi(\bar{x},y) \leq \varphi(\bar{x},z) + d(y,z))\right]. 
\end{align*}  
\end{nota}
This is a re-expression of the axioms given in \cite[Theorem 9.12]{MTFMS} that capture that $\varphi$ is the distance predicate of a non-empty definable set. 
In other words, $\frk{M} \models \D y\varphi(\bar{a},y)$ if and only if $\varphi^{\frk{M}}(\bar{a},y)$ is the distance predicate of a non-empty definable set.

\begin{nota}
  If $\varphi(\bar{x},y)$ is a real valued formula, then we write $\DS y \varphi(\bar{x},y)$ for
  \[
    \wexists y (\varphi(\bar{x},y)=0\wedge \forall z(d(y,z) = \varphi(\bar{x},z))).
  \]
\end{nota}

It is not hard to show that $\frk{M} \models \DS y \varphi(\bar{a},y)$ if and only if $\varphi^{\frk{M}}(\bar{a},y)$ is the distance predicate of a singleton. Therefore $\varphi(\bar{x},y)$ defines a function in $\frk{M}$ if and only if $\frk{M} \models \forall \bar{x} \DS y \varphi(\bar{x},y)$.

Note that $\D x$ and $\DS y$ are, syntactically speaking, quantifiers which take real-valued formulas and produce closed formulas.

We will frequently use the following fact, which was originally shown in \cite[Theorem 9.12]{MTFMS}.\footnote{Strictly speaking they only show this for formulas of the form $\varphi(x)$, but the extension to uniformly definable families satisfying $\forall\bar{y} \D x \varphi(x,\bar{y})$ is immediate.}

\begin{fact}\label{fact:rel-quan}
  If $T \models \forall\bar{y} \D x \varphi(x,\bar{y})$, then
  \begin{itemize}
  \item for any real-valued formula $\psi(x,\bar{y},\bar{z})$, there is a real-valued formula $\eta(\bar{y},\bar{z})$ such that for any $\frk{M} \models T$ and any $\bar{b}\bar{c} \in \frk{M}$, $\eta^{\frk{M}}(\bar{b},\bar{c}) = \inf\{ \psi(a,\bar{b},\bar{c}) : a \in \frk{M}\text{, }\varphi^{\frk{M}}(a,\bar{b})\}$,
  \item the same with $\sup$ instead of $\inf$,
  \item for any closed formula $F(x,\bar{y},\bar{z})$, there is a closed formula $G(\bar{y},\bar{z})$ such that for any $\frk{M} \models T$ and any $\bar{b}\bar{c} \in \frk{M}$, $\frk{M} \models G(\bar{b},\bar{c})$ if and only if $\frk{M} \models F(a,\bar{b},\bar{c})$ for every $a \in \frk{M}$ with $\varphi^{\frk{M}}(a,\bar{b}) = 0$,
  \item for any closed formula $F(x,\bar{y},\bar{z})$, there is a closed formula $H(\bar{y},\bar{z})$ such that for any $\frk{M} \models T$ and any $\bar{b} \bar{c} \in \frk{M}$, $\frk{M} \models H(\bar{b},\bar{c})$ if and only if there is an elementary extension $\frk{N} \succeq \frk{M}$ and an $a \in \frk{N}$ such that $\frk{N} \models F(a,\bar{b},\bar{c})$, and
  \item the analogous statements for open formulas.
  \end{itemize}

  If $T \models \forall \bar{y} \DS x \varphi(x,\bar{y})$, then for every model $\frk{M} \models T$, there is a function $f : \frk{M}^{|\bar{y}|} \to \frk{r}$ such that for any $\bar{b} \in \frk{M}$, $\varphi(x,\bar{b})$ is the distance predicate of the singleton $\{f(\bar{b})\}$. For any (real-valued, closed, or open) formula $X(\bar{y},\bar{z},w)$, there is a formula $Y(\bar{y},\bar{z})$ logically equivalent to $X(\bar{y},\bar{z},f(\bar{y}))$ in every model of $T$.
  
  Furthermore, these formulas can be produced in a uniformly computable way.
\end{fact}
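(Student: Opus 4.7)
The plan is to reduce every clause to the real-valued $\inf$ clause, which is itself a parametric version of \cite[Theorem 9.12]{MTFMS}. The candidate formula is
\[
\eta(\bar{y},\bar{z}) := \inf_x \bigl[\psi(x,\bar{y},\bar{z}) + \Delta_\psi(\varphi(x,\bar{y}))\bigr],
\]
where $\Delta_\psi$ is the syntactic modulus of uniform continuity of $\psi$ in $x$; the preceding fact guarantees that composing the computable function $\Delta_\psi$ with $\varphi$ yields a computable $\Lc$-formula, so $\eta$ is well-formed.

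To verify the formula, I would fix $\frk{M} \models T$ and parameters $\bar{b},\bar{c} \in \frk{M}$, and write $D = \{a \in \frk{M} : \varphi(a,\bar{b}) = 0\}$. The inequality $\eta^{\frk{M}}(\bar{b},\bar{c}) \leq \inf_{D} \psi(a,\bar{b},\bar{c})$ is immediate, since the bracketed expression equals $\psi$ on $D$. For the reverse inequality I would pass to a sufficiently saturated elementary extension $\frk{N} \succeq \frk{M}$; the hypothesis $\D x\varphi(x,\bar{b})$ then produces, for each $a \in \frk{N}$, some $a' \in D^{\frk{N}}$ with $d(a,a') = \varphi(a,\bar{b})$, and uniform continuity gives $\psi(a',\bar{b},\bar{c}) \leq \psi(a,\bar{b},\bar{c}) + \Delta_\psi(\varphi(a,\bar{b}))$, so that $\inf_{D^{\frk{N}}} \psi \leq \eta^{\frk{N}}$. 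Elementarity transfers both quantities back to $\frk{M}$, using the standard fact that the infimum of a formula over a definable set is invariant under elementary extension. The $\sup$ case is dual, with $\psi \dotdiv \Delta_\psi(\varphi)$ in place of $\psi + \Delta_\psi(\varphi)$.

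For the closed and open formula clauses I would invoke the explicit computable translation noted earlier that rewrites any closed formula $F$ as a closed condition $\chi \leq 0$. Then ``$\forall a \in D,\ F(a,\bar{b},\bar{c})$'' becomes the closed condition $\sup_a [\chi \dotdiv \Delta_\chi(\varphi)] \leq 0$, yielding the required $G$. The weak existential clause is just consistency of the partial type $\{\varphi = 0,\ \chi \leq 0\}$, expressible as the closed condition $\inf_a \max(\chi,\varphi) \leq 0$. The open-formula clauses are dual.

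Finally, if $T \models \forall \bar{y}\DS x \varphi(x,\bar{y})$, then $\varphi$ defines a function $f$, and $X(\bar{y},\bar{z},f(\bar{y}))$ agrees with both the infimum and supremum of $X(\bar{y},\bar{z},w)$ over the singleton $\{f(\bar{y})\}$, so the previous constructions applied with $w$ as the bound variable produce the desired $Y(\bar{y},\bar{z})$. Uniform computability is essentially automatic, since every syntactic operation used is computable and $\Delta$ is uniformly computable by hypothesis. I expect the only real subtlety to be conceptual rather than computational: one must invoke invariance of infima over definable sets under elementary extension in the verification step above, rather than naively comparing values in $\frk{M}$ and $\frk{N}$.
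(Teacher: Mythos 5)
Your argument is correct, and it is essentially the proof the paper is implicitly relying on: the paper states this as a Fact and cites \cite[Theorem 9.12]{MTFMS} rather than proving it, and your formula $\inf_x[\psi + \Delta_\psi(\varphi)]$ (truncated at $1$) is the standard relativization trick from that source, with the closed/open/$\wexists$ clauses reduced to it exactly as one would expect. One small remark on the verification: the appeal to ``invariance of infima over definable sets under elementary extension'' is, as stated, very close to the thing being proved; it is cleaner to avoid the saturated extension altogether for the reverse inequality by using that $\D x\,\varphi(x,\bar b)$ makes $\varphi(\cdot,\bar b)$ the exact distance predicate of its zero set \emph{in $\frk{M}$ itself}, so for each $a\in\frk{M}$ and $\varepsilon>0$ there is $a'\in D(\frk{M})$ with $d(a,a')\le\varphi(a,\bar b)+\varepsilon$, whence $\inf_{D(\frk{M})}\psi\le\psi(a,\bar b,\bar c)+\Delta_\psi(\varphi(a,\bar b)+\varepsilon)$ and the claim follows from continuity and monotonicity of $\Delta_\psi$. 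With that adjustment nothing external is needed beyond the characterization of distance predicates already packaged into the $\D$ quantifier.
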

  
In light of these facts, we will use standard notation for relative quantification (i.e.\ expressions such as $\sup_{x \in D}$, $(\forall x \in D)$, and $(\wexists x \in D)$), and we will use common notation for definable functions and constants.

\section{Expansions}

\noindent We need to specify a few notions of expansions and interdefinability
in continuous logic.


\begin{defn}
  \begin{enumerate}[label=(\roman*)]
  \item For a given metric signature $\mathcal{L}$ and a finitary $\mathcal{L}$-formula
$\varphi(\overline{x})$, a \emph{definitional expansion} of $\mathcal{L}$
by $\varphi$ is a metric signature $\mathcal{L}^{\ast}$ containing
the same sorts as $\mathcal{L}$ and a single new predicate symbol
$P$ with $a(\varphi)=a(P)$ and $\Delta_{\varphi}=\Delta_{P}$. For
an $\mathcal{L}$-structure $\mathfrak{A}$, the corresponding $\mathcal{L}^{\ast}$-structure
$\mathfrak{A}^{\ast}$ is given by interpreting $P$ as $\varphi$.
We also refer to iterated definitional expansions as definitional
expansions.
\item An $\mathcal{L}$-structure $\mathfrak{A}$ and a $\mathcal{K}$-structure
$\mathfrak{B}$ are \emph{interdefinable} if there are definitional expansions
$\mathfrak{A}^{\ast}$ and $\mathfrak{B}^{\ast}$ which make them
isomorphic up to relabeling of sorts and predicate symbols. (We allow
metrics to be relabeled.) 

An elementary class $C_{0}$ of $\mathcal{L}$-structures and an elementary class $C_{1}$
of $\mathcal{K}$-structures are \emph{interdefinable} if there are functors $F: C_{0} \to C_{1}$ and $G: C_{1} \to C_{0}$ given by uniform definitional expansions and relabelings which form an equivalence of categories, where we treat $C_0$ and $C_1$ as categories with elementary embeddings as morphisms (i.e.\ $F\circ G$ and $G\circ F$ are both naturally isomorphic to the identity functor\footnote{That is to say, for each structure $\frk{A}$ in $C_0$, there is a designated isomorphism $\alpha_{\frk{A}}: \frk{A} \to G\circ F (\frk{A})$ such that for any $\frk{B} \in C_0$ and any elementary map $f : \frk{A} \preceq \frk{B}$, $\alpha_{\frk{B}} \circ f = (G \circ F(f))\circ \alpha_{\frk{A}}$. And likewise for $C_1$.}). 

Note
that we aren't requiring that the syntactic moduli of continuity match. 

\item Given a metric structure $\mathfrak{A}$, an \emph{imaginary expansion}
of $\mathfrak{A}$ is one of the following operations:
\begin{enumerate}[label=$\bullet$,leftmargin=1em]
\item Appending a product sort $P=\prod_{i<n}O_{i}$ for some $0<n\leq\omega$
and sorts $O_{i}\in\mathcal{S}$. By convention the metric on a finitary
product sort will always be the maximum of the component metrics and
the metric on an $\omega$-product sort will always be $\sup_{i<\omega}2^{-i}d_{O_{i}}$. We also append projection predicates $\pi_i$ on $P \times O_i$ for each $i<n$, where $\pi_i(\left<x_0, x_1, \dots, x_{n-1} \right>,y)=d_{O_i}(x_i,y)$.
\item Appending a $\varnothing$-definable set $D$ in sort $O$ as a new sort $O_D$ together with an inclusion predicate $\iota$ on $O_D \times O$, where $\iota(x,y)=d_O(z,y)$ for $x\in O_D$ and $y,z\in O$ with $z$ the element of $D$ corresponding to $x$.
 The metric $d_{O_D}$ is the restriction of $d_O$ to $D$.
\item For $\rho$, a $\varnothing$-definable pseudo-metric on sort $O$,
appending the quotient sort $O/\rho$ along with a quotient predicate $q$ on $O \times O/\rho$, where $q(x,\left[ y \right]_\rho) = \rho(x,y)$ for $x,y\in O$, where $\left[ y \right]_\rho$ is the $\rho$-equivalence class of $y$. This is well-defined because $\rho$ is a pseudo-metric.
\end{enumerate}
  \end{enumerate}
We also refer to iterated imaginary
expansions as imaginary expansions.\footnote{Even though imaginary
expansions are defined for structures and not signatures, of the three forms of imaginary expansion, only expansion by a definable set is not uniform across all structures of a given signature, as every definable pseudo-metric can be written in the form $\rho(x,y)=\sup_ {\overline{z}}|\varphi(x,\overline{z})-\varphi(y,\overline{z})| $, since $\rho(x,y) = \sup_z |\rho(x,z)-\rho(y,z)|$ always holds, and such an expression is a definable pseudo-metric in any $\mathcal{L}$-structure.}

\end{defn}

Recall that we have restricted ourselves to relational languages at this point, which is why the projection, inclusion, and quotient maps are encoded as predicates.

The added generality of allowing $\omega$-tuples and passing to definable
sets is natural and somewhat necessary in continuous logic \cite[Section 11]{MTFMS}. $\omega$-tuples
are necessary for canonical parameters since a formula can involve
countably many parameters. Note that for any formula $\varphi$
on an $\omega$-product sort, if $\mathfrak{A}\models \varphi(\overline{a})$, then for any $\varepsilon>0$ the fact that $\mathfrak{A}\models\varphi(\overline{a})<\varepsilon$ only depends on finitely many terms in $\overline{a}$, uniformly
as a function of $\varepsilon$, because $\varphi$ needs to be uniformly
continuous with regards to the $\omega$-product metric. Because of this, $\omega$-product
sorts are just as safe as finitary product sorts in terms of compatibility
with ultraproducts and preserving the category of models. Explicitly
passing to definable sets is necessary in situations such as the following:
In a connected metric structure $\mathfrak{M}$ with a non-trivial
definable discrete subset $D$, there is no uniformly continuous pseudo-metric
$\rho$ on $\mathfrak{M}$ that will make $\mathfrak{M}/\rho$ isometric
to $D$ (or $D$ plus a single new point or anything else you would do in the discrete setting), since the quotient map $\mathfrak{M}\rightarrow\mathfrak{M}/\rho$
is continuous and continuous functions preserve connectedness. 


\begin{lem}\phantomsection\label{lem:prod}
  \begin{enumerate}[label=(\roman*)]
  \item For any metric signature $\mathcal{L}$ (not necessarily countable), there is a metric signature
$\mathcal{K}$ which is interdefinable with an imaginary expansion of $\mathcal{L}$ such
that $\mathcal{K}$ has a uniform bound of $2$ on the arities of
its predicate symbols. For computable signatures, the signature $\mathcal{K}$
is uniformly computable from $\mathcal{L}$, and presentations of $\mathcal{L}$-structures
can be uniformly converted into corresponding presentations of $\mathcal{K}$-structure and vice versa.
\item There is a $\mathcal{K}$-theory $T_\mathcal{L}$, uniformly computable from $\mathcal{L}$, such that the models of $T_\mathcal{L}$ are precisely the interpretations of $\mathcal{L}$-structures.
  \end{enumerate}
\end{lem}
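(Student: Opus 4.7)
The plan is to replace every predicate of arity $\geq 3$ with a unary predicate on a product sort, so that only binary projections and metrics remain as non-unary symbols. For each $P \in \mathcal{P}$ with $a(P) = (S_0,\dots,S_{n-1})$ and $n \geq 3$, add the product sort $\Pi_P = S_0 \times \cdots \times S_{n-1}$ via imaginary expansion. Let $\mathcal{K}$ have as sorts the original $\mathcal{S}$ together with the $\Pi_P$, and as predicates: the original predicates of arity $\leq 2$, the binary metrics on all sorts, the binary projections $\pi_i^P$ provided by the imaginary expansion, and for each converted $P$ a unary $\tilde{P}:\Pi_P \to [0,1]$ with syntactic modulus $\Delta_{\tilde{P}} := \Delta_P$ (valid because the product metric is $\max_i d_{S_i}$, so continuity in each argument with modulus $\Delta_P$ yields continuity in the product metric with the same modulus). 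All arities are at most $2$. For interdefinability, set $\tilde{P}(y) := P(\pi_0^P(y),\dots,\pi_{n-1}^P(y))$ in one direction and $P(x_0,\dots,x_{n-1}) := \tilde{P}(y)$ in the other, where on the $\mathcal{K}$-side $\pi_i^P(y)$ is the $\DS$-definable $i$-th coordinate function and $y$ in the backward definition is the unique element of $\Pi_P$ whose coordinates are the $x_i$. Uniformity in $\mathcal{L}$ immediately gives the computability claim.

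For part (ii), the theory $T_\mathcal{L}$ consists, for each converted $P$ of arity $n$, of three schemes of closed conditions:
\begin{itemize}
\item coordinate extraction: for each $i < n$, $(\forall y \in \Pi_P)\,\DS z\,\pi_i^P(y,z)$, which ensures that $\pi_i^P(y,\cdot)$ is the distance predicate of a unique $f_i(y) \in S_i$;
\item surjectivity of tupling: $(\forall x_0 \in S_0)\cdots(\forall x_{n-1} \in S_{n-1})\,\DS y \in \Pi_P\,[\max_{i<n}\pi_i^P(y,x_i)]$, which ensures every tuple of coordinates is realized by a unique element of $\Pi_P$;
\item correct product metric: $(\forall y,y' \in \Pi_P)\,d_{\Pi_P}(y,y') = \max_{i<n} d_{S_i}(f_i(y),f_i(y'))$, using the $f_i$ from coordinate extraction.
\end{itemize}
These are uniformly computable from $\mathcal{L}$, and any $\mathcal{K}$-model of $T_\mathcal{L}$ arises as the forward image of its $\mathcal{L}$-reduct, obtained by discarding the $\Pi_P$-sorts and interpreting each original $P$ via the backward definition.

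The main obstacle I anticipate is purely categorical bookkeeping: confirming that the forward and backward constructions extend to functors on the categories of $\mathcal{L}$- and $\mathcal{K}$-structures (with elementary embeddings as morphisms) whose compositions are naturally isomorphic to the respective identities, and that syntactic moduli, predicate ranges, and effective presentations track uniformly. Both should follow cleanly from the uniformity of the definitions in $\mathcal{L}$ and the uniqueness assertions built into the $\DS$-axioms.
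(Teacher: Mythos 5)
Your proposal is correct and takes essentially the same route as the paper: replace each higher-arity predicate by a unary predicate on an imaginary product sort, keep the binary projection predicates, and axiomatize in $T_\mathcal{L}$ that the projections really are projections and the product sorts really are products (your coordinate-extraction, tupling-surjectivity, and metric axioms are exactly the details the paper leaves implicit). The only thing worth double-checking is the modulus bookkeeping for $\tilde{P}$, which is harmless under the paper's single-modulus convention with the max product metric.
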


\begin{proof}
(i) For each predicate symbol $p$, we can define a unary formula on the
sort $\Pi_{O\in a(p)}O$ in the obvious way. These, together with
projection maps between product sorts and the original $\mathcal{L}$-sorts,
are clearly enough to define any predicate originally definable in
an $\mathcal{L}$-structure in a completely uniform way. Since
the projection maps are encoded as $2$-ary predicates, we have the
required arity bound. This procedure is also clearly uniformly computable,
both for signatures and presentations of structures.

(ii) All that $T_\mathcal{L}$ needs to say is that the predicates corresponding to projection maps are actually projection maps and that the product sorts are products of the sorts they project onto.
\end{proof}
\begin{defn}
For any metric signature $\mathcal{L}$ with designated home sort $H$ and any real number $r$ satisfying $0<r\le1$, $C_{\mathcal{L},r}$ is the class
of $\mathcal{L}$-structures $\mathfrak{A}$ satisfying $\text{diam}(H^{\mathfrak{A}})\geq r$.
\end{defn}

The following lemma is the source of all non-uniformity relative to
$r$ in the entire construction and is analogous to the fact that a discrete structure with only one element
cannot interpret any structure with more than one element. It could be avoided by appending
a new compact sort isometric to $[0,1]$ with the standard metric\footnote{Or literally any other fixed non-trivial compact metric space, such as one with two points.}
 and
letting that be the designated home sort $H$.

\begin{lem} \label{lem:comp-im}
Let $X$ be a compact metric space. For structures in the class $C_{\mathcal{L},r}$, there is a uniformly
definable imaginary $Y$ such that for any $\mathfrak{A}\in C_{\mathcal{L},r}$,
$Y^{\mathfrak{A}}\cong X$, with each point of $Y^{\mathfrak{A}}$
and every continuous function $(Y^{\mathfrak{A}})^{n}\rightarrow[0,1]$
uniformly $\varnothing$-definable.
\end{lem}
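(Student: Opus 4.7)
The plan is to construct $Y$ in three stages: first build an imaginary isometric to $[0,1]$, then use it to build one isometric to the Cantor space $2^\omega$, and finally cut out $X$ as a quotient of $2^\omega$ via a continuous surjection. Throughout, the construction depends on $r$ and on $X$ but not on the particular $\mathfrak{A} \in C_{\mathcal{L},r}$.

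For the unit interval, consider on the product sort $H^\omega$ the $\omega$-infinitary formula
\[
  f(\bar x) = \sum_{k \geq 1} 2^{-k} \min\!\bigl(d(x_0,x_k)/r,\,1\bigr)
\]
together with the $\varnothing$-definable pseudo-metric $\rho_0(\bar x,\bar y) = |f(\bar x) - f(\bar y)|$. Since $\text{diam}(H^{\mathfrak{A}}) \geq r$, any target $t = \sum_{k\geq 1} 2^{-k} c_k \in [0,1]$ with $c_k \in \{0,1\}$ can be approximated to within any $\varepsilon > 0$ by picking $a,b \in H^{\mathfrak{A}}$ with $d(a,b) \geq r - \varepsilon$ and setting $x_0 = a$, $x_k = b$ when $c_k = 1$, and $x_k = a$ otherwise. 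Thus the image of $f$ is dense in $[0,1]$, and since quotient sorts in continuous logic are taken up to metric completion, the sort $H^\omega / \rho_0$ is isometric to $[0,1]$ with its standard metric. Next, $\{0,1\} \subseteq [0,1]$ is $\varnothing$-definable as the zero set of $y \mapsto \min(y,1-y)$, and its $\omega$-product yields an imaginary isomorphic to $2^\omega$ with the usual Cantor metric (up to scaling).

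For $X$ itself, fix any continuous surjection $F : 2^\omega \to X$ (such a surjection exists for every non-empty compact metric space) and define $\rho_X(\bar a,\bar b) = d_X(F(\bar a),F(\bar b))$. Since $\rho_X$ is continuous on the compact space $(2^\omega)^2$ and depends only on the $\varnothing$-definable coordinate projections, Stone--Weierstrass expresses it as a uniform limit of polynomial combinations of those projections, so $\rho_X$ is itself $\varnothing$-definable as an $\omega$-infinitary formula. The quotient $Y = 2^\omega/\rho_X$ is then isometric to $X$ via $[\bar a]_{\rho_X} \mapsto F(\bar a)$. For the uniformity clauses: for each $x \in X$ the function $\bar a \mapsto d_X(F(\bar a),x)$ is a specific continuous, hence $\varnothing$-definable, function on $2^\omega$ that descends to the distance predicate of $[x] \in Y$; similarly, any continuous $\phi : X^n \to [0,1]$ lifts through $F^{\times n}$ to a continuous, hence $\varnothing$-definable, function on $(2^\omega)^n$, with the resulting formula depending only on $\phi$.

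The main technical friction is the edge case in the first stage where $\text{diam}(H^{\mathfrak{A}}) \geq r$ is approached but not attained, so that $f$ never literally reaches $1$ and the naive quotient falls short of $[0,1]$; this is absorbed by the completion convention for quotient sorts. The only non-elementary ingredient is the existence of the continuous surjection $F : 2^\omega \to X$, a standard topological fact applied once at the meta-level.
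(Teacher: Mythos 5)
Your proof is correct and follows essentially the same route as the paper: use the diameter hypothesis to produce a nontrivial compact imaginary with definable points, pass to an $\omega$-power to get a copy of Cantor space in which every point is uniformly $\varnothing$-definable, and then realize $X$ as the quotient by the pullback of $d^X$ along a continuous surjection $2^\omega \to X$, definability of the pullback (and of the point and function predicates) coming from the identification of the relevant type spaces with powers of $2^\omega$. The only divergence is your first stage, which detours through a copy of $[0,1]$ built on $H^\omega$ before carving out $\{0,1\}$, whereas the paper quotients $H^2$ directly by $\min\{\frac{1}{r}|d(x_0,x_1)-d(y_0,y_1)|,1\}$; both work (and your placement of $\min(d(x_0,x_k)/r,1)$ inside the sum cleanly collapses all pairs at distance $\geq r$ to a single class).
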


\begin{proof}
Let $x_0,x_1,y_0,y_1$ be variables in $H$ and consider 
the $\mathcal{L}$-formula 
\[
\rho(x_0,x_1,y_0,y_1)=\min\left\{\frac{1}{r}|d(x_{0},x_{1})-d(y_{0},y_{1})|,1\right\}.
\]
This is a pseudo-metric on $H^{2}$. $H^{2}/\rho$ contains more than
one point for any $\mathfrak{A}\in C_{r}$, because of the diameter
requirement. In particular it has a definable subset consisting of
the $\rho$-equivalence classes of pairs satisfying $d(x_{0},x_{1})=0$ and pairs satisfying
$d(x_{0},x_{1})\geq r$, with each of those points being $\varnothing$-definable
by the formulas $\frac{1}{r}d(x_{0},x_{1})$ and $1\dotdiv\frac{1}{r}d(x_{0},x_{1})$,
respectively. Let $D$ denote this definable set. Clearly $D$ is
always isometric to the discrete space with two points, so in particular
$C=D^{\omega}$ is an isometric copy of Cantor space with the standard
metric with every point uniformly $\varnothing$-definable. It is
well known that Cantor space continuously surjects onto any compact
metric space $(X,d^{X})$, so by pulling back
$d^{X}$ to $C^{2}$ we get a continuous pseudo-metric
on $C$ whose quotient is isometric to $X$. Therefore,
since the type space $S_{C^{2}}(T)$ is isomorphic to $C^{2}$ (both
metrically and topologically), the pullback metric is a continuous
function on $S_{C^{2}}(T)$ and is thus a definable pseudo-metric
on $C$. Since each point of $C$ is uniformly definable, this gives
the required uniformly definable imaginary $Y$. 

Finally for an arbitrary continuous function $f:X^{n}\rightarrow[0,1]$,
the pullback on the type space $S_{C^{n}}(T)$ is continuous and
therefore definable. By construction it is compatible with the quotient
map $C\rightarrow Y$ and is therefore a definable predicate on
the imaginary $Y^{\mathfrak{A}}$.
%
\end{proof}


There are some potential subtleties involving uniform computability of formulas defining computable compact imaginaries and computable predicates on them. In the current context we only need Lemma \ref{lem:comp-im} for a small handful of very specific tame compact metric spaces, so we'll deal with computability on a case-by-case basis. 


\begin{lem} \label{lem:one-sort-fin}
For any $C_{\mathcal{L},r}$, with $r>0$, if  $\{O_n^\mathfrak{A}\}_{n<k}$ is a finite collection of sorts of diameter $\leq 1$, then the disjoint union $U=\bigsqcup_{n<k}O_n$ with metric $d(x,y)=1$ for $x\in O_n$ and $y\in O_m$ with $n\neq m$ and $d(x,y)=d_{O_n}(x,y)$ for $x,y\in O_n$ is a uniformly definable imaginary in $C_{\mathcal{L},r}$. Furthermore the formulas defining $U$ are uniformly computable in $\mathcal{L}$, $r$, and the list of sorts.
\end{lem}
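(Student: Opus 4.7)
The plan is to realize $U$ as a quotient of a product sort that pairs a ``tag'' coming from a uniformly definable finite discrete imaginary with the product of all the sorts $O_n$, in such a way that the tag picks out which coordinate of the product matters.

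First, apply Lemma~\ref{lem:comp-im} with $X$ equal to the $k$-point metric space in which all distinct points are at distance $1$. This produces a uniformly definable imaginary $S$ isometric to this space, with $k$ uniformly $\varnothing$-definable constants $c_0,\ldots,c_{k-1}$ enumerating its points. Since the metric $d_S$ takes only the values $0$ and $1$, the formulas $\chi_i(s):=1-d_S(s,c_i)$ are $\{0,1\}$-valued and act as characteristic functions of the $c_i$'s. Next, form the finitary product sort $P:=S\times O_0\times\cdots\times O_{k-1}$ as an imaginary expansion; by convention its metric is the coordinatewise maximum.

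Now define the pseudo-metric
\[
  \rho\bigl((s,\bar x),(t,\bar y)\bigr) \;:=\; \max\!\Bigl( d_S(s,t),\; \max_{i<k}\,\chi_i(s)\chi_i(t)\, d_{O_i}(x_i,y_i) \Bigr).
\]
This expression is a restricted formula built uniformly from $d_S$, the definable constants $c_i$, and the sort metrics $d_{O_i}$, so it is a uniformly $\varnothing$-definable pseudo-metric provided one checks it is actually a pseudo-metric. Symmetry and vanishing on the diagonal are immediate; for the triangle inequality one splits into cases based on which $c_i$'s the first, middle, and third tag coordinates equal, using that each $d_{O_n}$ is bounded by $1$ whenever the middle tag disagrees with an outer tag (so that $\rho\ge 1$ on that step). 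Taking the quotient $U:=P/\rho$ via imaginary expansion produces a sort whose points are in bijection with $\bigsqcup_{n<k} O_n$: the class of $(c_i,\bar x)$ depends only on $x_i$, classes with distinct tags are at distance $1$, and classes sharing the tag $c_i$ are at distance $d_{O_i}(x_i,y_i)$. This is exactly the metric described in the statement.

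The only real work is verifying the triangle inequality for $\rho$, which is the single place where the diameter bound of $1$ on each $O_n$ enters and where the finiteness of $k$ matters; everything else is bookkeeping. Uniform computability of the defining formulas in $\mathcal{L}$, $r$, and the list of sorts follows by tracking the construction: the imaginary $S$ is uniformly computable from $\mathcal{L}$ and $r$ by the computable version of Lemma~\ref{lem:comp-im} applied to the tame compact space $[k]$, the product sort $P$ and quotient operation are uniformly computable imaginary expansions, and the formula $\rho$ is written down explicitly from these ingredients.
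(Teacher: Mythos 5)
Your proposal is correct and takes essentially the same route as the paper: both realize $U$ as the quotient of the product sort $\Delta_k\times\prod_{n<k}O_n$ (with $\Delta_k$ the $k$-point discrete imaginary supplied by Lemma~\ref{lem:comp-im}, each point definable) by a tag-controlled definable pseudo-metric, differing only in the cosmetic choice of formula (your max with characteristic functions versus the paper's truncated sum of minima). The triangle-inequality case analysis and the computability bookkeeping are as you describe.
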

\begin{proof}
By Lemma \ref{lem:comp-im}, the discrete space $\Delta_k = \{0,...,k-1\}$, with the metric $\delta(x,y)=1$ if $x\neq y$, is uniformly an imaginary of $C_{\mathcal{L},r}$ (although in particular we don't have to go through Cantor space, and we can realize $ \Delta_k$ as a quotient of some $\Delta_{2^{\ell}}=(\Delta_2)^{\ell}$ in a uniformly computable way). Furthermore we can arrange that each element of $\Delta_k$ is definable.

Define a formula $\rho(\overline{x},\overline{y})$ on $\Delta_k \times \prod_{n<k}O_n$ by

$$ \rho(\overline{x},\overline{y})=\min\left\{\delta(x_0,y_0)+ \min_{n<k}\left( d(x_{n+1},y_{n+1})+ \delta(x_0,n) \right), 1\right\}.$$

Checking definitions gives that $\Delta_k \times \prod_{n<k}O_n/\rho$ is the required imaginary. This formula is also clearly uniformly computable.
\end{proof}

\section{Countable Disjoint Unions of Sorts}

\noindent A common trick in discrete logic is merging a finite collection of
sorts by taking the disjoint union and adding unary predicates selecting
out each sort. This can't be extended to infinitely many predicates
without changing the category of models; by compactness there will
be models with elements not in any given sort. The added flexibility
of continuous logic allows us to do this with countably many sorts
at once without changing the category of models. Specifically we can arrange it so that any sequence of types that ought to limit to an `unsorted' type is shunted into a single unique overflow point. This is very similar to the emboundment method used in \cite{Yaacov2008-ITACFO} to treat unbounded metric structures.


It should be noted that if $\mathcal{L}$ has finitely many sorts
and (possibly infinitely many) predicates with uniformly bounded arity, this section can
be skipped and the construction in Theorem \ref{thm:main3} will work directly. 
\begin{defn}
Let $\{O_{n}\}_{n<\omega}$ be a countable sequence of $\mathcal{L}$-sorts.
For any $\mathcal{L}$-structure $\mathfrak{A}$, the \emph{(countable) metric disjoint
union} of $\{O_{n}\}_{n<\omega}$, written $\bigsqcup_{n<\omega}^{\ast}O_{n}$, is a metric structure with the 
set 
\[
U^{\mathfrak{A}}=\{\ast\}\cup\bigsqcup_{n<\omega}O_{n}^{\mathfrak{A}}
\]
as its universe,  where $\ast$ is a single new point. 

To define the metric on $U^{\mathfrak{A}}$, let $x,y\in O_{n}^{\mathfrak{A}}$ and $z\in O_{m}^{\mathfrak{A}}$,
with $n\neq m$. Then we have
\begin{align*}
d_{U}^{\mathfrak{A}}(x,y)&=2^{-n}d_{O_{n}}^{\mathfrak{A}}(x,y),\\
d_{U}^{\mathfrak{A}}(x,z)&=|2^{-n}-2^{-m}|,\text{ and}\\
d_{U}^{\mathfrak{A}}(x,\ast)&=2^{-n},
\end{align*}
where the other values are determined by symmetry. We will prove in Proposition \ref{prop:cmdu} that this defines a complete metric space.

A predicate on some $O_{n_{1}}^{\mathfrak{A}}\times\dots\times O_{n_{k}}^{\mathfrak{A}}$
is extended to a predicate on $U^{\mathfrak{A}}$ by setting its value
to $1$ (i.e.\ `false') when the input is not part of its domain. 

Finally we add a distance predicate for the set $\{\ast\}$ (recall that we have restricted ourselves to relational languages, so we can't use a constant). 
\end{defn}

\begin{prop}\phantomsection\label{prop:cmdu}
  \begin{enumerate}[label=(\roman*)]
  \item The countable metric disjoint union, $U=\bigsqcup_{n<\omega}^{\ast}O_{n}$, of a sequence $\{O_{n}\}_{n<\omega}$
of $\mathcal{L}$-sorts is well-defined, i.e.\ the metric given in
the definition is actually a complete metric.
\item The predicates interpreted on it are uniformly continuous. If they are Lipschitz in the original signature, they will still be Lipschitz (although possibly with a different Lipschitz constant).
\item For any fixed $\Lc$ and $r$, the countable metric disjoint union is isomorphic to a uniformly definable imaginary
for all $\mathfrak{A}\in C_{\mathcal{L},r}$. The relevant formulas
and the map of presentations $\mathfrak{A}\mapsto U^{\mathfrak{A}}$
are uniformly computable from the sequence $\{O_{n}\}_{n<\omega}$,
the signature $\mathcal{L}$, and  the real number $r$, 
so in particular if those are all
computable, then the relevant formulas and the map of presentations
are computable.
\item Each $O_{n}$ as a subset of $U$ is a definable subset of $U$
and (considering $U$ as an imaginary sort) there is a definable bijection between $O_{n}$ as a sort and
$O_{n}$ as a definable subset of $U$. The relevant formulas are
uniformly definable in $\mathcal{L}$ and computable.
\item For a fixed sequence $\mathscr{S}=\{O_n\}_{n<\omega}$ of $\mathcal{L}$-sorts with $O_0 = H$, the designated home sort, there is a signature $\mathcal{L}_\mathscr{S}$ and a theory $T_\mathscr{S}$, both uniformly computable from $\mathcal{L}$ and $\mathscr{S}$, and a (real-valued) $\mathcal{L}_\mathscr{S}$-sentence $\Xi_\mathscr{S}$, such that the models of $T_\mathscr{S}\cup \{\Xi_\mathscr{S} \geq r\}$ are precisely the same as reducts to the sort  $\bigsqcup^{\ast}_{n<\omega}O_n$ of structures in $C_{\mathcal{L},r}$.
  \end{enumerate}
\end{prop}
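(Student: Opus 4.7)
The plan is to handle (i)--(v) in sequence, with the main content being the construction of $U$ as a uniformly definable imaginary in (iii); the other parts then follow more or less directly. For (i), the metric $d_U$ is visibly symmetric and positive definite from the formula, and the triangle inequality reduces to a finite case analysis depending on which sorts $x, y, z$ inhabit, using $|2^{-n} - 2^{-k}| \leq |2^{-n} - 2^{-m}| + |2^{-m} - 2^{-k}|$ together with the bound $2^{-n} d_{O_n} \leq 2^{-n} = d_U(O_n, \ast)$. Completeness follows from a dichotomy on a Cauchy sequence: either $d_U(x_i, \ast) \to 0$ (convergence to $\ast$) or the sequence eventually stays in a single sort $O_n$, where completeness of $d_{O_n}$ applies. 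For (ii), the key observation is that every element of $O_{n_i}$ is at $d_U$-distance at least $2^{-(n_i+1)}$ from every point outside $O_{n_i}$ (achieved at $O_{n_i+1}$), so a sufficiently small coordinatewise perturbation of inputs either keeps each coordinate in its $O_{n_i}$ (where the original modulus applies, scaled by $2^{n_i}$) or keeps the predicate value pinned at $1$; this gives a modulus of uniform continuity of the form $\varepsilon \mapsto \Delta_P(2^N \varepsilon)$ for small $\varepsilon$, with Lipschitz constants inflated by the analogous factor.

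For (iii), realize $U$ as a quotient of an $\omega$-product by a definable pseudo-metric. Apply Lemma \ref{lem:comp-im} to get a uniformly definable imaginary $\hat\omega$ isometric to the compact space $\omega \cup \{\infty\}$ with metric $d(n, m) = |2^{-n} - 2^{-m}|$ (setting $2^{-\infty} = 0$), with each point uniformly $\varnothing$-definable. Form the product sort $S = \hat\omega \times \prod_{n<\omega} O_n$ and set
\[ \rho((t, \bar x), (s, \bar y)) := d_{\hat\omega}(t, s) + \sum_{n<\omega} f_n(t)\, f_n(s) \cdot 2^{-n}\, d_{O_n}(x_n, y_n), \]
where $f_n(t) := \max(0, 1 - 2^{n+1} d_{\hat\omega}(t, n))$. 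The nearest other point of $\hat\omega$ to $n$ sits at distance exactly $2^{-(n+1)}$, so $f_n$ vanishes on every point of $\hat\omega$ except $n$, where it equals $1$; the series converges uniformly since the $n$-th summand is bounded by $2^{-n}$, making $\rho$ a definable predicate. A direct case check at the realized points of $S$ confirms $\rho$ is a pseudo-metric whose quotient is canonically isometric to $U$, with $[(n, \bar x)]$ representing $x_n \in O_n$ and $[(\infty, \bar x)]$ representing $\ast$. All formulas are uniformly computable from $\mathcal{L}$, $r$, and $\mathscr{S}$.

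For (iv), a direct computation shows that $|d_U(y, \ast) - 2^{-n}|$ is the distance predicate of $O_n \subseteq U$, so $O_n$ is a definable subset. The definable bijection with the original sort comes from the construction in (iii): fixing any auxiliary tuple $\bar y \in \prod_k O_k$, the map $x \mapsto [(n, (y_0, \ldots, y_{n-1}, x, y_{n+1}, \ldots))]_\rho$ is definable and is a bijection onto $O_n \subseteq U$. For (v), let $\mathcal{L}_\mathscr{S}$ have a single sort $U$, a predicate $\delta_\ast$ for $d_U(\cdot, \ast)$, and one extended predicate symbol for each $P \in \mathcal{L}$. The theory $T_\mathscr{S}$ axiomatizes: uniqueness of a point $\ast$ with $\delta_\ast = 0$; that $\sup_y F(\delta_\ast(y)) = 0$, where $F$ is the distance in $[0,1]$ to the compact set $\{0\} \cup \{2^{-n} : n < \omega\}$; the between-shell distance formulas; completeness of each shell $\{y : \delta_\ast(y) = 2^{-n}\}$ with the scaled metric; the appropriate (scaled) modulus of uniform continuity for each extended predicate together with the axiom that it takes the value $1$ whenever any argument lies off its declared shell; and the $\mathcal{L}$-axioms transported via the shell identifications. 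The sentence $\Xi_\mathscr{S} := \sup_{x, y \in O_0} d_U(x, y)$, taken via the relative quantifier over the definable shell $O_0 = \{y : \delta_\ast(y) = 1\}$, computes the diameter of $H$, so $\Xi_\mathscr{S} \geq r$ carves out exactly the reducts of structures in $C_{\mathcal{L}, r}$.

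The main obstacle is in (iii): producing a single $\varnothing$-definable predicate $\rho$ that is simultaneously continuous on the entire product sort and restricts to the scaled-down metric $2^{-n} d_{O_n}$ on each fiber over $n \in \omega$ while collapsing the fiber over $\infty$ to a single point. The bump functions $f_n$ are the essential gadget, exploiting that every $n \in \omega$ is isolated in $\hat\omega$; without them one would need infinitely many selector predicates whose supports could collide near the accumulation point $\infty$, destroying either continuity or the pseudo-metric property.
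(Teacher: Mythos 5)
Your proposal is correct and follows essentially the same route as the paper: realizing $U$ as the quotient of $X\times\prod_{n<\omega}O_n$ (with $X\cong\{0\}\cup\{2^{-n}:n<\omega\}$ obtained from Lemma~\ref{lem:comp-im}) by the pseudo-metric built from the bump functions $f_n$ is exactly the paper's construction, and parts (i), (ii), (iv), and (v) are handled the same way. The only cosmetic slips are that in (iv) the auxiliary tuple $\bar y$ should be quantified away (an $\inf$ over $\bar y$ suffices, since the $\rho$-class of $(n,\bar y)$ depends only on $y_n$) rather than fixed as a parameter, and in (v) what needs to be axiomatized is that each shell is a \emph{nonempty definable} set (the paper's axiom $\D x\,|d(x,\ast)-2^{-n}|$), not its completeness, which is automatic.
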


\begin{proof}
(i) The expression given for $d$ clearly obeys all metric space axioms besides the triangle inequality. The only unobvious case is the one consisting of two points in some $O_n$ and a third point in some $O_m$ with $n\neq m$ (where we let $O_\omega = \{\ast\}$ with the understanding that $\text{``}2^{-\omega}\text{''}=0$). 
 Let $x,y\in O_n$ and $z\in O_m$ with $n\neq m$. By symmetry there are only $2$ cases to check:
\begin{itemize}
\item
$d(x,y)\leq 2^{-n}$ and $d(x,z)=d(y,z)=|2^{-n}-2^{-m}|\geq 2^{-(\min\{n, m\} + 1)}$, so $d(x,z)+d(y,z)\geq 2^{-\min\{n,m\}} \geq 2^{-n} \geq d(x,y)$, and in this case the triangle inequality is obeyed.
\item
 $d(x,z) = |2^{-n}-2^{-m}|$, so $d(x,z)\leq d(x,y) + |2^{-n}-2^{-m}|=d(x,y)+d(y,z)$.
\end{itemize}
 So the triangle inequality is obeyed in all cases.

To see that the metric space is complete, note that any Cauchy sequence is either eventually contained in some $O_n$ or limits to $\ast$.
 
(ii) If a predicate $P$ on sort $O_{n_{1}}\times\dots\times O_{n_{k}}$
has modulus of uniform continuity $\Delta_{P}(x)$, then the corresponding
predicate on $U$ is uniformly continuous with modulus
of uniform continuity 
$$\Delta_{P}^{\ast}(x)=\min\{ \max\{\Delta_{P}(\min\{2^{N}x,1\}),2^{N+1}x\} ,1\},$$
where $N=\max\{n_{0},\dots, n_{k-1}\}$. Note that if $P$ has Lipschitz constant $L$, then on $U$ it will have Lipschitz constant $2^{N+1} L$, and in particular it will still be Lipschitz.

(iii) By Lemma \ref{lem:comp-im}, the class $C_{\mathcal{L},r}$ has a uniformly
definable imaginary isometric to the metric space $(X,d)$ where $X=\{0\}\cup\{2^{-n}:n<\omega\}$
and $d$ is the standard metric on $\mathbb{R}$. Let $W=X\times\Pi_{n<\omega}O_{n}$
be the infinitary product sort.

(iv) For any $x\in U$, $d(x,O_{n})=|d(x,\ast)-2^{-n}|$. 

Let $Q:X\rightarrow [0,1]$ be the natural inclusion map, which is a definable predicate on $X$ uniformly for all members of $C_{\mathcal{L},r}$. For each $n$, let 
$$R_n(x)=1 \dotdiv 2^{n+1}|Q(x)-2^{-n}|,$$ 
i.e.\ $R_n$ is a predicate on $X$ which takes on the value $1$ at $2^{-n}$ and $0$ everywhere else. Now define a pseudo-metric on $W$ by $$\rho (\overline{x},\overline{y}) = |Q(x_0)-Q(y_0)| + \sum_{n<\omega} 2^{-n}  R_n (x_0) R_n(y_0)  d_{O_n}(x_{n+1},y_{n+1}) .$$
Although in principle this is $[0,2]$-valued, by construction it will only take on values in $[0,1]$. Taking the quotient $W/\rho$ will identify any two elements $\overline{a},\overline{b}\in W$ if and only if $a_0=b_0$ and either $a_0 = 0$ or $a_0=2^{-n}$ and $a_n=b_n$. So by making the identification of elements of the form $(2^{-n},\dots,a_n,\dots)$ with $a_n$ and elements of the form $(0,\dots)$ with $\ast$, we get a bijection between $W/\rho$ and $U$, and by checking definitions we see that $\rho$ induces the correct metric on $U$.

(v)  The signature $\mathcal{L}_\mathscr{S}$ has a single sort and the same predicate symbols as $\mathcal{L}$ with the same total arity along with a single new unary predicate symbol $Q$. For each predicate symbol $P$, the syntactic modulus of continuity is
\[
  \min\{ \max\{\Delta_{P}(\min\{2^{N}x, 1\}), 2^{N+1}x \} ,1\},
\]
 where $\Delta_P$ is the syntactic modulus of continuity of $P$ in $\mathcal{L}$, and $\Delta_Q(x) = x$. 


$T_\mathscr{S}$ has 
\[
  \DS x Q(x)
\]
as an axiom (i.e.\ a closed formula asserting that $Q$ is the distance predicate of a singleton). 
Let $\ast$ be a constant referring to the unique point defined by $Q$. (We add this constant in order to make the following axioms easier to write down, but it is not strictly necessary.)

Let $f:[0,1]\rightarrow[0,1]$ be a total computable continuous function whose zeroset is precisely $X=\{0\}\cup\{2^{-n}:n<\omega\}$. $T_\mathscr{S}$ has the axioms
\begin{align*}
  \forall x& f(d(x,\ast)) = 0\text{ and} \\
  \D x& |d(x,\ast) - r|\text{ for each }r\in X.
\end{align*}
The first axiom listed here should be thought of as saying `$\forall x d(x,\ast) \in X$.'


 By abuse of notation, label those definable sets $O_n$. The sentence $\Xi_\mathscr{S}$ is given by
$$ \Xi_\mathscr{S} = \sup_{x,y\in O_0} d(x,y),$$
i.e.\ the diameter of $O_0$. 
Finally, for each predicate symbol $P$ on the sort $O_{n_1}\times\dots \times O_{n_{k}}$, there are the axioms
\[
  \forall \bar{x}  \left(\bigwedge_{0<i\leq k}d(x_i, O_{n_i}) > 2^{-n_i-2} \to P(\bar{x}) = 1\right)\text{ and}
\]
\[
  \left(\forall \bar{x},\bar{y}  \in O_{n_1}\times \dots \times O_{n_k}\right)  |P(\bar{x})-P(\bar{y})| \leq \Delta_P(\min\{1,\max_{0<i\leq k}2^{n_i} d(x_i,y_i)\}),
\]
i.e.\ $P$ is $1$ outside of its original domain and inside its original domain it obeys the modulus of uniform continuity that it originally obeyed after distances are scaled correctly.
  \end{proof}

The following proposition is clear by construction and in particular by part \emph{(iii)} of Proposition \ref{prop:cmdu} above.
\begin{prop} \label{prop:one-sort}
If $\mathcal{L}$ is a metric signature with countably many sorts
and we let $U=\bigsqcup_{O\in\mathcal{S}}^{\ast}O$ be the imaginary
disjoint union of all $\mathcal{L}$-sorts, then for all $\mathfrak{A}\in C_{\mathcal{L},r}$, we have that
$\mathfrak{A}$ and $U^{\mathfrak{A}}$ have uniformly definable imaginary
expansions which are uniformly interdefinable.

\end{prop}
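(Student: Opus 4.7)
The plan is to verify interdefinability by constructing the two imaginary expansions explicitly and noting that everything is already given by the previous proposition.

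First, for the forward direction, start with any $\frk{A} \in C_{\mathcal{L},r}$ and apply Proposition~\ref{prop:cmdu}(iii) to the sequence $\mathscr{S}=\{O\}_{O\in\mathcal{S}}$ (in some fixed enumeration). This produces a uniformly definable imaginary sort isomorphic to $U^{\frk{A}}$ together with the distance predicate for $\{\ast\}$ and extended versions of every predicate of $\mathcal{L}$; these are all definable in $\frk{A}$ by uniformly computable formulas. So $\frk{A}$ has an imaginary expansion $\frk{A}^\ast$ in which $U^{\frk{A}}$ appears as a sort with its predicates as given by Proposition~\ref{prop:cmdu}(v).

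For the backward direction, start with $U^{\frk{A}}$ viewed as a reduct in the signature $\mathcal{L}_\mathscr{S}$ of Proposition~\ref{prop:cmdu}(v). By Proposition~\ref{prop:cmdu}(iv), each $O_n$ sits inside $U$ as a definable subset (cut out by $d(x,\ast)=2^{-n}$, which is a definable condition in $\frk{A}$ since $r$ is fixed and $\{0\}\cup\{2^{-n}:n<\omega\}$ is already a computable compact imaginary) and there is a uniformly definable bijection between the definable-set imaginary and the original sort $O_n$. Adding each of these sorts via the definable-set imaginary expansion recovers $\frk{A}$ (up to relabeling) as an imaginary expansion $(U^{\frk{A}})^\ast$, where the metric on $O_n$ is obtained from $d_U$ by scaling by $2^n$, and each original predicate $P$ on $O_{n_1}\times\dots\times O_{n_k}$ is obtained by restricting the extended predicate from $\mathcal{L}_\mathscr{S}$ to the corresponding tuple of definable subsets. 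All of these formulas are uniformly computable from $\mathcal{L}$ and $\mathscr{S}$.

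Finally, one checks that the two expansions are isomorphic up to relabeling: composing them gives back the original structure on the nose, so they witness interdefinability in the sense of the definition in Section 3. Since the constructions in both directions are given by uniformly computable $\mathcal{L}_\mathscr{S}$- and $\mathcal{L}$-formulas respectively, and since they are purely syntactic (the formulas do not depend on $\frk{A}$), the interdefinability is uniform across the class $C_{\mathcal{L},r}$. There is no real obstacle here; the content is entirely contained in Proposition~\ref{prop:cmdu}, and all that remains is to observe that parts (iii), (iv), and (v) together furnish both halves of a uniformly definable back-and-forth.
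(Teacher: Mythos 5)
Your proposal is correct and follows essentially the same route as the paper, which simply observes that the result is immediate from Proposition~\ref{prop:cmdu} (especially part (iii)); you have just unpacked that observation into the explicit back-and-forth via parts (iii), (iv), and (v). The only minor quibble is that the definability of each $O_n$ inside $U$ comes directly from the distance predicate to $\ast$ included in the disjoint-union signature (via $d(x,O_n)=|d(x,\ast)-2^{-n}|$), not from the compact imaginary $\{0\}\cup\{2^{-n}:n<\omega\}$, but this does not affect the argument.
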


Aside from the issue of topological dimension and continuous degrees of points in the structure discussed in the introduction, one of the mild technical advantages of a countable metric disjoint union over
an $\omega$-product is that parameters in non-trivial
$\omega$-products tend to be poorly behaved in that they act like
 countable collections of parameters rather than finite collections
of parameters. This general phenomenon of single parameters acting
like countable collections of parameters can be blamed for many of the pathologies 
in continuous logic (e.g.\ pairs $ab$ such that $\text{tp}(ab)$ is principal but $\text{tp}(a/b)$ is not, theories with exactly two separable models, small theories with only `approximately $\omega$-saturated' separable models, and $\omega$-categorical theories which fail to be $\omega$-categorical after naming an element). In \cite{Yaacov2007}, Usvyatsov and Ben Yaacov introduced the notion of a $d$-finite type, which, intuitively speaking, characterizes when a finitary type actually behaves like a discrete finitary type rather than a discrete $\omega$-type. Uniform $d$-finiteness is a technical strengthening of $d$-finiteness that was needed in an analog of Lachlan’s theorem on the number of countable models of a superstable theory in \cite{Yaacov2007}.

\begin{prop}\phantomsection\label{prop:not-bad}
  \begin{enumerate}[label=(\roman*)]
  \item Let $\overline{a}\in\bigsqcup_{n<\omega}^{\ast}O_{n}$ be an $\ell$-tuple
of elements not equal to $\ast$. For any set $B$ of parameters,
$\text{tp}(\overline{a}/B)$ is (uniformly) $d$-finite as a type in the correct
product sort if and only if it is (uniformly) $d$-finite as a type in the sort
$\left(\bigsqcup_{n<\omega}^{\ast}O_{n}\right)^{\ell}$. (Note that since
$\ast\in\text{dcl}(\varnothing)$, its type is always uniformly $d$-finite
and adding it to a tuple preserves $d$-finiteness.)
\item For any (locally) compact set $B\subset O_{k}^{\mathfrak{A}}$,
the corresponding set in $\bigsqcup_{n<\omega}^{\ast}O_{n}^{\mathfrak{A}}$
is (locally) compact. (Although note that the countable metric disjoint union will typically fail to be locally compact at $\ast$.)
\item For any topologically finite dimensional (resp.\ weakly infinite dimensional) set  $B\subset O_{k}^{\mathfrak{A}}$,
the corresponding set in $\bigsqcup_{n<\omega}^{\ast}O_{n}^{\mathfrak{A}}$ is finite dimensional  (resp.\ weakly infinite dimensional). If each $O^{\mathfrak{A}}_k$ is finite dimensional, then $\bigsqcup_{n<\omega}^{\ast}O_{n}^{\mathfrak{A}}$ will be either finite dimensional or weakly infinite dimensional and locally finite dimensional away from $\ast$. If each $O^{\mathfrak{A}}_k$ is weakly infinite dimensional, then $\bigsqcup_{n<\omega}^{\ast}O_{n}^{\mathfrak{A}}$ is as well.
  \end{enumerate}
\end{prop}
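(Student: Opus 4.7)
The plan is to prove each part by transferring properties through the local structure of $U = \bigsqcup^{\ast}_{n<\omega} O_n$, exploiting two observations: within each $O_n \subseteq U$ the induced metric is exactly $2^{-n} d_{O_n}$, which is bi-Lipschitz equivalent to $d_{O_n}$; and elements of distinct sorts sit at fixed positive distance $|2^{-n} - 2^{-m}|$ from each other. Since $\bar{a}$ avoids $\ast$, each component $a_i$ lies in some fixed $O_{n_i}$, and membership of $O_n$ is a $\varnothing$-definable closed condition (via $|d(x,\ast) - 2^{-n}| = 0$), so any realization of $\text{tp}(\bar{a}/B)$ has its $i$th component forced into $O_{n_i}$.

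For part (i), I would compare the $d$-metric on realizations of $\text{tp}(\bar{a}/B)$ viewed in the product sort $O_{n_1} \times \cdots \times O_{n_\ell}$ versus $U^\ell$. In the product sort the metric is the max of the $d_{O_{n_i}}$, while in $U^\ell$ the $i$th coordinate contributes $2^{-n_i} d_{O_{n_i}}$ (since matching coordinates stay in $O_{n_i}$). Hence the two metrics on the set of realizations differ by a factor bounded between $2^{-\max_i n_i}$ and $1$; in particular a sequence of realizations is $d$-Cauchy in one sort iff it is $d$-Cauchy in the other, and the logic topology is identical. The biconditional for $d$-finiteness follows, and the uniform version just records that the modulus witnessing uniform $d$-finiteness transforms by the same uniform rescaling.

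For parts (ii) and (iii), the inclusion $O_k \hookrightarrow U$ is a topological embedding (metrics differ only by the factor $2^{-k}$) and, for any point of $O_k$, sufficiently small $U$-balls are contained entirely in $O_k$. This immediately transfers compactness, local compactness, and topological dimension of subsets of $O_k$ into $U$. For the disjoint-union-level statements in (iii), I would verify that the cover of $U \setminus \{\ast\}$ by the clopen sets $O_n$ exhibits local finite-dimensionality away from $\ast$, and then invoke the classical countable sum theorem for weak infinite-dimensionality: if each $O_n$ is weakly infinite dimensional (in particular if finite dimensional), then any two disjoint closed sets in $U$ can be separated by a partition $\{C_i\}$ with empty intersection, built by combining separating partitions in each $O_n$ together with a neighborhood of $\ast$ of radius $2^{-n}$ for the tail, since the $O_n$-diameters tend to zero.

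The main obstacle I expect is part (iii): the dimension-theoretic patching near the overflow point $\ast$ needs to be done carefully, because although each sort is weakly infinite dimensional, stringing together essentially separating sequences across infinitely many sorts risks producing a strongly infinite dimensional structure if the tail is not controlled. The key technical observation that resolves this is that the diameter of $O_n$ inside $U$ is $2^{-n}$, so the tail $\{x : d(x,\ast) < 2^{-n}\}$ shrinks, and any essentially infinite sequence of separators can be truncated past a finite stage and completed with a neighborhood base of $\ast$, producing empty intersection. Uniformity issues in part (i) and keeping track of the exact constants are routine once the biconditional is set up.
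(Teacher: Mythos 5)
Your proposal is correct and takes essentially the same route as the paper, whose entire proof is the observation that the inclusions $O_k \hookrightarrow \bigsqcup^{\ast}_{n<\omega} O_n$ are open, isometric-up-to-scaling bijections onto definable sets; your rescaling argument for part (i) and the small-ball argument for parts (ii) and (iii) are exactly the content of that observation. The only place you go beyond the paper is the explicit sum-theorem patching near $\ast$ using the shrinking diameters $2^{-n}$, which the paper leaves implicit but which is the right way to justify the global weak-infinite-dimensionality claim.
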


\begin{proof}
These all follow from the fact that the natural inclusion maps $O_{k}\rightarrow\bigsqcup_{n<\omega}^{\ast}O_{n}$
are open, isometric-up-to-scaling, and bijections between definable sets.
\end{proof}
In particular if $T$ is `hereditarily $\omega$-categorical' (i.e.\ $\omega$-categorical over every finite set of parameters) or has an exactly $\omega$-saturated separable model, then $\text{Th}\left(\bigsqcup_{n<\omega}^{\ast}O_{n}\right)$ will as well \cite{Yaacov2007}.

\section{\label{sec:Making-everything-Lipschitz}Making Everything Lipschitz}

\noindent Ultimately we will need all of our predicate symbols to be Lipschitz since they will be encoded directly into a metric and metrics are always Lipschitz. There are a couple of ways to accomplish this. If the reader does not care about computability, this
section can be skipped using the following Fact~\ref{fact:lip}. 
 Also it
should be noted that Fact \ref{fact:lip} does not rely on the signature in question being countable,
but the result that we will use, Proposition \ref{prop:everything-lip}, does in general.
\begin{fact} \label{fact:lip}
Let $(X,d)$ be a metric space and $f:X\rightarrow[0,1]$ be a uniformly
continuous function. For each $0<n<\omega$, let

\[
f_{n}(x)=\inf_{y}\min\left\{ \frac{1}{n}f(y)+d(x,y),1\right\}.
\]

Then $f_{n}(x)$ is a sequence of $1$-Lipschitz functions such that
$nf_{n}\rightarrow f$ uniformly as $n\rightarrow\infty$.
\end{fact}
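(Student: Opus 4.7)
The plan is to verify the two claims separately: that each $f_n$ is $1$-Lipschitz, and that $n f_n$ converges uniformly to $f$. Both arguments are standard inf-convolution estimates, so the main care is in handling the outer truncation at $1$ and in extracting uniform convergence from the modulus of uniform continuity of $f$.

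First I would prove the Lipschitz bound. Fix $x, x' \in X$ and an arbitrary $y \in X$. The triangle inequality gives $\frac{1}{n} f(y) + d(x,y) \leq \frac{1}{n} f(y) + d(x',y) + d(x,x')$, and the function $t \mapsto \min\{t,1\}$ is itself $1$-Lipschitz and satisfies $\min\{a + c, 1\} \leq \min\{a,1\} + c$ for $c \geq 0$ (easy case analysis on whether $a+c$ exceeds $1$). Combining these yields
\[
\min\Bigl\{\tfrac{1}{n} f(y) + d(x,y), 1\Bigr\} \leq \min\Bigl\{\tfrac{1}{n} f(y) + d(x',y), 1\Bigr\} + d(x,x').
\]
Taking the infimum over $y$ on both sides gives $f_n(x) \leq f_n(x') + d(x, x')$, and by symmetry $f_n$ is $1$-Lipschitz.

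Next I would handle the uniform convergence. Plugging $y = x$ into the defining infimum immediately gives $n f_n(x) \leq \min\{f(x), n\} \leq f(x)$, so only the lower bound requires work. Let $\Delta$ be the modulus of uniform continuity of $f$, normalized so that $\Delta \leq 1$ (which we may do since $f$ takes values in $[0,1]$). For any $y$, if $f(y) + n d(x,y) \leq n$ then
\[
f(x) - \min\{f(y) + n d(x,y), n\} = f(x) - f(y) - n d(x,y) \leq \Delta(d(x,y)) - n d(x,y),
\]
while if $f(y) + n d(x,y) > n$ then $f(x) - \min\{\cdots\} \leq 1 - n$, which is nonpositive for $n \geq 1$. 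Splitting the first case further according to whether $d(x,y) \geq \frac{1}{n}$ (in which case $\Delta(d(x,y)) - n d(x,y) \leq 1 - 1 = 0$) or $d(x,y) < \frac{1}{n}$ (in which case $\Delta(d(x,y)) \leq \Delta(\frac{1}{n})$) bounds the supremum over $y$ by $\Delta(\frac{1}{n})$. Thus $0 \leq f(x) - n f_n(x) \leq \Delta(\frac{1}{n})$ for all $x$ and all $n \geq 1$, which is the desired uniform convergence.

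I do not expect any significant obstacle. The only slightly delicate points are verifying that truncation by $1$ does not break the Lipschitz estimate and choosing the threshold $d(x,y) = \frac{1}{n}$ to balance the two terms in the convergence estimate; both are routine once one keeps track of the normalization $f \in [0,1]$.
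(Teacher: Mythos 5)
Your proof is correct. The paper states this as a background \emph{Fact} and gives no proof at all, so there is nothing to compare against; your argument is the standard inf-convolution (McShane-type) one: each $f_n$ is an infimum of $1$-Lipschitz functions of $x$, and $nf_n(x)=\inf_y\min\{f(y)+nd(x,y),n\}$ is squeezed between $f(x)-\Delta(1/n)$ and $f(x)$. The only detail worth making explicit is that in the step $\Delta(d(x,y))\leq\Delta(1/n)$ you are using that the modulus $\Delta$ is non-decreasing; this is harmless, since one may always take the optimal modulus $\Delta(t)=\sup\{|f(x)-f(y)|:d(x,y)\leq t\}$, which is non-decreasing, bounded by $1$, and tends to $0$ at $0$ exactly because $f$ is uniformly continuous.
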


In general the transformation in Fact~\ref{fact:lip} would cost a jump to compute on a given structure, i.e.\ if some degree $\mathbf{a}$ computes a structure $(M,P)$ with predicate $P$, then $\mathbf{a}^\prime$ will compute $(M,P_0,P_1,\dots)$ with $P_n$ given by the formula in Fact \ref{fact:lip}.  So to ensure that the construction is computable, we will have to use something else. We will use the fact that if $\alpha$ is a concave non-decreasing function such that $\alpha(0)=0$, then for any metric $d$, $\alpha(d)$ is also a metric. If one of our predicates $P$ has a concave non-decreasing modulus of uniform continuity, then this means that we can compose it with the metric to get a uniformly equivalent metric relative to which $P$ is $1$-Lipschitz.

The following is a fairly elementary real analytic fact, but we will
include a proof for the sake of demonstrating that the procedure is computable. Note that we could avoid this lemma entirely if our moduli of uniform continuity were non-decreasing and sub-additive, which is often required and can always be arranged as shown in this lemma.
\begin{lem} \label{lem:concave}
Let $\delta:[0,1]\rightarrow[0,1]$ be a continuous function satisfying
$\delta(0)=0$. There is a continuous, concave, non-decreasing
function $\alpha:[0,1]\rightarrow[0,1]$ satisfying $\alpha(0)=0$
and $\alpha\geq\delta$. Furthermore, $\alpha$ is uniformly computable from $\delta$. 
\end{lem}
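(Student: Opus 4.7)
The plan is to construct $\alpha$ in two stages: first replace $\delta$ by its non-decreasing envelope, then take the upper concave envelope of that. Specifically, I define
\[
\tilde\delta(x) = \max_{0 \leq t \leq x} \delta(t),
\]
which is well-defined by compactness, continuous by a standard argument, non-decreasing, valued in $[0,1]$, satisfies $\tilde\delta(0) = 0$ since $\delta(0) = 0$, and dominates $\delta$. It is uniformly computable from $\delta$ as the maximum of a computable function on a computable compact interval.

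Second, let $\alpha$ be the upper concave envelope of $\tilde\delta$, which in one dimension can be expressed via Carath\'eodory as
\[
\alpha(x) = \sup\{\lambda \tilde\delta(a) + (1-\lambda)\tilde\delta(b) : a, b \in [0,1],\ \lambda \in [0,1],\ \lambda a + (1-\lambda)b = x\}.
\]
This is concave (as the sup over all convex combinations representing $x$), valued in $[0,1]$ since $\tilde\delta$ is, and dominates $\tilde\delta \geq \delta$ by taking $a = b = x$. For $\alpha(0) = 0$: given $\varepsilon > 0$, continuity of $\delta$ at $0$ furnishes $\eta > 0$ with $\tilde\delta(\eta) \leq \varepsilon$, so $L_\varepsilon(x) = \varepsilon + (1-\varepsilon)x/\eta$ is an affine majorant of $\tilde\delta$ on $[0,1]$; then $\alpha(0) \leq L_\varepsilon(0) = \varepsilon$, and letting $\varepsilon \to 0$ gives $\alpha(0) = 0$.

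Next I verify that $\alpha$ is non-decreasing---this is the step where the preliminary passage to $\tilde\delta$ matters, since the concave envelope of $\delta$ itself need not be monotone (e.g.\ for $\delta(x) = x(1-x)$). Given $x_1 < x_2$ and a representation $x_1 = \lambda a + (1-\lambda)b$ with $a \leq x_1 \leq b$, I produce a representation of $x_2$ with value at least as large: if $b \geq x_2$, shift the weight toward $b$ by setting $\lambda' = (b - x_2)/(b - a) \leq \lambda$, which increases the value because $\tilde\delta(b) \geq \tilde\delta(a)$; otherwise $b < x_2$, and I use the representation $x_2 = \lambda'' b + (1-\lambda'') \cdot 1$, whose value is at least $\tilde\delta(b)$ and hence at least $\lambda \tilde\delta(a) + (1-\lambda)\tilde\delta(b)$. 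Continuity of $\alpha$ on $(0,1)$ is automatic from concavity and boundedness; at $0$ it follows from $\alpha(0) = 0$, $\alpha \geq 0$, and the upper bound $\alpha \leq L_\varepsilon$; at $1$ it follows from monotonicity combined with the concavity lower bound $\alpha(x) \geq x\alpha(1) + (1-x)\alpha(0) = x\alpha(1)$.

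Finally, for computability: the defining formula for $\alpha(x)$ is a supremum of a function jointly continuous in $(\lambda, a, b)$ over the compact set $\{(\lambda, a, b) \in [0,1]^3 : \lambda a + (1-\lambda)b = x\}$, so $\alpha(x)$ is uniformly computable from $\tilde\delta$, and hence from $\delta$. The main obstacle is the verification of monotonicity---no further real-analytic ideas are needed, but the two-stage construction is essential; everything else is a routine manipulation of concave envelopes.
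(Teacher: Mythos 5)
Your construction is correct in its real-analytic content but takes a genuinely different route from the paper's. The paper defines $\alpha$ directly as the infimum of all affine majorants $mx+b$ of $\delta$ with $m,b\geq 0$ (so concavity, monotonicity, and $\alpha\geq\delta$ are immediate from the definition) and spends essentially all of its effort on computability: it discretizes the constraints to dyadic grid points, computes each approximant $\alpha_n$ as a finite linear program, and bounds $\alpha-\alpha_n$ by $2\Delta_\delta(2^{-n})$ using the modulus of uniform continuity of $\delta$, which yields a computable modulus of uniform convergence. You instead approach $\alpha$ from below, as the two-point concave envelope of the running maximum $\tilde\delta$; the two-stage construction cleanly isolates where monotonicity comes from, and your verifications of monotonicity, of $\alpha(0)=0$ via the affine majorants $L_\varepsilon$, and of continuity at the endpoints are sound. (Both constructions in fact produce the same function, the least non-decreasing concave majorant of $\delta$.)

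The one step I would not accept as written is the computability claim. ``Supremum of a continuous function over a compact set'' is not a valid effectiveness principle: the constraint set $K_x=\{(\lambda,a,b)\in[0,1]^3:\lambda a+(1-\lambda)b=x\}$ is, a priori, only a co-c.e.\ closed set (the zero set of a computable function), and suprema of computable functions over co-c.e.\ closed compact sets are in general only approximable from above --- a co-c.e.\ closed singleton $\{c\}\subseteq[0,1]$ with $c$ right-c.e.\ but not computable already defeats the principle. The repair is easy in your situation: parametrize the admissible representations by $(a,b)\in[0,x]\times[x,1]$ with $\lambda=(b-x)/(b-a)$, note that $[0,x]\times[x,1]$ is effectively compact uniformly in $x$ (the same fact the paper uses for $[0,x]$), and use the computable modulus of uniform continuity of $\tilde\delta$ to extend the integrand computably across the diagonal $a=b=x$, where its value is $\tilde\delta(x)$. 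With that repair the computability clause of the lemma goes through; without it you have only shown that $\alpha$ is upper semicomputable.
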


\begin{proof} $\alpha$ will be the `non-decreasing convex hull of $\delta$,' defined
by the following formula:

\[
\alpha(x)=\inf\{mx+b\,:\, 0 \leq m,b,\,(\forall y\in[0,1])my+b\geq\delta(y)\}.
\]






For each $n<\omega$, define
$$\alpha_n (x)=\inf\{mx+b\,:\, 0 \leq m,b,\,(\forall k\in \{0, 1, \dots, 2^n \})m(2^{-n}k)+b\geq\delta(2^{-n}k)\}.$$
When computing $\alpha_n$, the largest $m$ necessary is at most $$m_n= 2^n \sup_{0\leq k < 2^n}  |\delta(2^{-n}(k+1))-\delta(2^{-n} k)|,$$
and the largest $b$ is always at most $1$, so the computation of $\alpha_n$ amounts to 
 minimizing a $\delta$-computable linear function on a $\delta$-computable bounded polytope, so the $\alpha_n$ are uniformly computable in $\delta$ \cite[Chapter 5]{Weihrauch:2000:CAI:358357}. 
Furthermore note that since each $\alpha_n$ is the infimum of a family of Lipschitz functions with uniformly bounded Lipschitz coefficients, $\alpha_n$ is Lipschitz and in particular continuous.

Now all we need to show is that $\alpha_n$ converges uniformly to $\alpha$ with a computable modulus of uniform convergence. For computability considerations, we will need the fact that the modulus of uniform continuity of a continuous function $f$ on $[0,1]$ is uniformly computable from $f$ \cite[Chapter 6]{Weihrauch:2000:CAI:358357}. Let $\Delta_\delta$ be the modulus of uniform continuity of $\delta$. 
 By replacing $\Delta_\delta$ with $\sup_{0\leq y \leq x} \Delta_\delta (y)$ (which is uniformly computable from $\Delta_\delta$, since $[0,x]$ is effectively compact uniformly in $x$), we may assume that $\Delta_\delta$ is non-decreasing.

Now note that for each $n<\omega$, we have the following inequality:
\begin{equation}
\alpha_n \leq \alpha \leq \alpha_n + 2\Delta_\delta(2^{-n}). \tag{$\star$}
\end{equation}
To see that this inequality is true, observe that for each interval $I=\left[2^{-n}k,\allowbreak 2^{-n}(k+1)\right]$, we must have
\begin{align*}
  \delta(x)&\leq  \max\left\{   \delta(2^{-n}k), \delta(2^{-n}(k+1))\right\}  + \Delta(2^{-n}) \\
  &\leq  \min\left\{ \delta(2^{-n}k) , \delta(2^{-n}(k+1)) \right\}  + 2 \Delta(2^{-n}),
\end{align*}
for all $x\in I$. Therefore, if $m,b\geq 0$  satisfy the requirements in the infimum defining $\alpha_n$, then for all $x\in I$, 
$$mx+b\geq\max\{ \delta(2^{-n}k), \delta(2^{-n}(k+1))\},$$
and thus $(\star)$ follows, so we get that $\alpha_n\rightarrow \alpha$ uniformly as $n\rightarrow \infty$, and $\alpha$ is continuous. Furthermore, we clearly have a uniformly computable modulus of uniform convergence, so $\alpha$ is uniformly computable.

Finally note that $\alpha$ is concave and non-decreasing by construction (these are preserved by infima) and $\alpha(0)=0$ since for every $\varepsilon > 0$, there is an $m>0$ such that $mx+\varepsilon \geq \delta(x)$ for all $x\in[0,1]$ by continuity of $\delta$.
\end{proof}

So as long as we have a single modulus of continuity that all relation symbols obey, we can find an inter-definable structure with a Lipschitz signature. We can always arrange this if our signature is countable.

\begin{defn}[Uniform uniform continuity]
  \begin{enumerate}[label=(\roman*)]
  \item A family of functions $f\in F$ on a metric space $X$ is \emph{uniformly uniformly continuous} or \emph{u.u.c.}\ if there is a single modulus of uniform continuity valid for all $f\in F$.
  \item A metric signature $\mathcal{L}$ is \emph{u.u.c.}\ if $\Delta_{P}=\Delta_{Q}$ for all predicate symbols $P$
and~$Q$.
  \end{enumerate}
\end{defn}

Recall that two metric spaces $(X_0,d_0)$ and $(X_1,d_1)$ are bi-uniformly isomorphic if there is a uniformly continuous bijection $f:X_0 \rightarrow X_1$ with uniformly continuous inverse. Two metrics $d_0,d_1$ on the same space $X$ are uniformly equivalent if $(X,d_0)$ and $(X,d_1)$ are bi-uniformly isomorphic under the identity map.

\begin{lem}
  \begin{enumerate}[label=(\roman*)]
  \item If $d$ is a $[0,1]$-valued metric and $\alpha:[0,1]\rightarrow[0,1]$
is a continuous, concave, non-decreasing function satisfying
$\alpha(0)=0$, then $\max\{\alpha(d), d\}$ is a metric
that is uniformly equivalent to $d$.
\item If $(X,d)$ is a metric space with diameter $\leq 1$ and $f_{i}:X\rightarrow[0,1]$ for $i\in I$
is a family of u.u.c.\ functions with continuous,
sub-additive, non-decreasing modulus of uniform continuity $\alpha$,
then $(X,\max\{\alpha(d), d\})$ is a metric space bi-uniformly isomorphic
to $(X,d)$, such that the family $\{f_{i}\}_{i\in I}$ is $1$-Lipschitz.
  \end{enumerate}
\end{lem}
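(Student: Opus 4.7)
For part (i), write $d_\alpha(x,y) := \max\{\alpha(d(x,y)), d(x,y)\}$. Non-negativity, symmetry, and the identity of indiscernibles for $d_\alpha$ will follow immediately from the corresponding properties of $d$ together with $\alpha(0) = 0$. The main step is the triangle inequality, which I will split into cases. When $d(x,y) + d(y,z) \geq 1$, it is trivial because $d_\alpha$ is $[0,1]$-valued: $d_\alpha(x,y) + d_\alpha(y,z) \geq 1 \geq d_\alpha(x,z)$. Otherwise, I will invoke the standard fact that a concave non-decreasing function $\alpha : [0,1] \to [0,1]$ with $\alpha(0) = 0$ is sub-additive on pairs $s,t$ with $s + t \in [0,1]$; this follows by applying the chord inequality $\alpha(\lambda u) \geq \lambda \alpha(u)$ with $u = s+t$ and $\lambda = s/(s+t)$, $t/(s+t)$. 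Combining sub-additivity and monotonicity of $\alpha$ with the triangle inequality for $d$ then yields $\alpha(d(x,z)) \leq \alpha(d(x,y)) + \alpha(d(y,z)) \leq d_\alpha(x,y) + d_\alpha(y,z)$, while $d(x,z) \leq d_\alpha(x,y) + d_\alpha(y,z)$ is immediate, so taking the max on the left gives the triangle inequality for $d_\alpha$.

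Uniform equivalence of $d$ and $d_\alpha$ will then follow from the sandwich $d \leq d_\alpha \leq d + \alpha(d)$: the identity $(X,d) \to (X,d_\alpha)$ is uniformly continuous with modulus $\varepsilon \mapsto \varepsilon + \alpha(\varepsilon)$, using continuity of $\alpha$ at $0$, and its inverse is $1$-Lipschitz.

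For part (ii), the same argument will show that $d_\alpha := \max\{\alpha(d), d\}$ is a metric bi-uniformly isomorphic to $d$: sub-additivity of $\alpha$ is now assumed directly, and $\alpha(0) = 0$ is forced because $\alpha$ is a continuous modulus of uniform continuity. The Lipschitz property is then a one-line consequence of the definition of modulus of uniform continuity: for any $x, y \in X$ and $i \in I$, $|f_i(x) - f_i(y)| \leq \alpha(d(x,y)) \leq d_\alpha(x,y)$, so each $f_i$ is $1$-Lipschitz with respect to $d_\alpha$. There is essentially no serious obstacle in the argument; the only mildly fussy point is the case split in the triangle inequality, needed because the hypothesized sub-additivity $\alpha(s+t) \leq \alpha(s) + \alpha(t)$ only applies when $s+t$ stays inside the domain $[0,1]$.
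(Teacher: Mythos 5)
Your proof is correct and follows essentially the same route as the paper: concavity (resp.\ the assumed sub-additivity in part (ii)) together with monotonicity and $\alpha(0)=0$ yields the triangle inequality for $\max\{\alpha(d),d\}$, and the sandwich $d \leq \max\{\alpha(d),d\} \leq d + \alpha(d)$ together with continuity of $\alpha$ at $0$ gives uniform equivalence, with the Lipschitz claim in (ii) immediate from $|f_i(x)-f_i(y)| \leq \alpha(d(x,y)) \leq \max\{\alpha(d(x,y)),d(x,y)\}$. Your case split on $d(x,y)+d(y,z) \geq 1$ is in fact slightly more careful than the paper's one-line appeal to ``sub-additive compositions preserve pseudo-metrics,'' since $\alpha$ is only defined on $[0,1]$ and $d(x,y)+d(y,z)$ may exceed $1$.
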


\begin{proof}
(i) Concave functions are sub-additive. The pseudo-metric axioms are preserved under
composition with sub-additive, non-decreasing functions which fix $0$, so $\alpha(d)$ is a pseudo-metric. The maximum of two pseudo-metrics is still a pseudo-metric, so $\max\{\alpha(d),d\}$ is a pseudo-metric. $\max\{\alpha(d), d\}=0$
if and only if $d=0$, so it is actually a metric. $\max\{\alpha(d), d\}$ and $d$ are clearly uniformly equivalent. 

(ii) This is immediate from (i).
\end{proof}

In the previous lemma we only need to take the maximum with $d$ on the off chance that $\alpha=0$. Ultimately there is no harm in doing so.

\begin{lem}
 If $\mathcal{L}$
is a countable metric signature, then it is interdefinable with a
u.u.c.\ metric signature $\mathcal{K}$. Furthermore if $\mathcal{L}$ is computable, then we can
take $\mathcal{K}$ to be uniformly computable in $\mathcal{L}$.
\end{lem}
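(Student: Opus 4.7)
The plan is to rescale each predicate by a small constant so that all the rescaled moduli fit under one common modulus. Enumerate the predicate symbols of $\mathcal{L}$ as $\{P_n\}_{n<\omega}$ (padding trivially in the finite case), set $c_n := 2^{-n}$, and define
\[
  \Delta(x) := \sup_{n<\omega} c_n \Delta_{P_n}(x).
\]
First I would verify that $\Delta$ is a legitimate modulus, i.e.\ continuous at $0$ with $\Delta(0) = 0$: given $\varepsilon > 0$, pick $N$ with $2^{-N} < \varepsilon$; the tail $\sup_{n \geq N} c_n \Delta_{P_n}(x)$ is then at most $2^{-N} < \varepsilon$ uniformly in $x$, while the finite head $\max_{n<N} c_n \Delta_{P_n}(x)$ tends to $0$ with $x$ by continuity of each $\Delta_{P_n}$ at $0$. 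The same truncation argument shows that $\Delta$ can be computed to arbitrary precision from finitely many $\Delta_{P_n}$, so $\Delta$ is uniformly computable from $\{\Delta_{P_n}\}_n$.

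Next I would define $\mathcal{K}$ to have the same sorts as $\mathcal{L}$, predicate symbols $\{Q_n\}_{n<\omega}$ with $a(Q_n) = a(P_n)$, and common syntactic modulus $\Delta_{Q_n} = \Delta$ for every $n$, so that $\mathcal{K}$ is u.u.c.\ by construction. For interdefinability, given an $\mathcal{L}$-structure $\mathfrak{A}$, I would form $\mathfrak{A}^*$ by adjoining the predicates $Q_n := c_n P_n$ (whose syntactic modulus is $c_n \Delta_{P_n} \leq \Delta$, consistent with the $\mathcal{K}$-side declaration); given the resulting $\mathcal{K}$-reduct $\mathfrak{B}$, I would form $\mathfrak{B}^*$ by adjoining $P_n := \min\{c_n^{-1} Q_n, 1\}$, which recovers the original predicate exactly because $Q_n \leq c_n$ on the image of the first expansion. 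The expansions then agree via the identity on sorts and the obvious relabeling of predicate symbols; the syntactic moduli of the $\mathcal{L}^*$- and $\mathcal{K}^*$-predicates will not literally agree, but the definition of interdefinability explicitly permits this discrepancy under relabeling.

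There is no serious obstacle; the one delicate point is that the geometric weighting $c_n = 2^{-n}$ is essential, since a naive supremum $\sup_n \Delta_{P_n}$ need not be continuous at $0$ (consider e.g.\ $\Delta_{P_n}(x) = \min\{nx, 1\}$), and the weighting is also exactly what makes $\Delta$ computable via a finitary approximation. Everything in the construction---the enumeration, the constants $c_n$, the connectives $c_n x$ and $\min\{c_n^{-1} x, 1\}$, the modulus $\Delta$, and the signature $\mathcal{K}$---is uniformly computable from a computable presentation of $\mathcal{L}$, giving the computable version of the statement.
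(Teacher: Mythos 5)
Your proposal is correct and follows essentially the same route as the paper: enumerate the predicates, rescale $P_n$ by $2^{-n}$ so that each rescaled predicate obeys a single common modulus, and recover $P_n$ by the inverse rescaling, noting that interdefinability does not require the syntactic moduli to match. The only cosmetic difference is that you take the common modulus to be $\sup_n 2^{-n}\Delta_{P_n}$ where the paper uses $\sum_n 2^{-(n+1)}\Delta_{P_n}$; both dominate each rescaled modulus and both are continuous and uniformly computable by the same tail-truncation argument.
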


\begin{proof}
Let $\{P_{i}\}_{i<\omega}=\mathcal{P}$ be an enumeration of all the
predicate symbols in $\mathcal{L}$ (in any sort). For each $i<\omega$,
let $Q_{i}$ be the $\mathcal{L}$-formula $2^{-(i+1)}P_{i}$. The
$\mathcal{L}$-formulas $Q_{i}$ are u.u.c.\ with regards to the modulus of uniform continuity $\Delta = \sum_{i<\omega}2^{-(i+1)}\Delta_{P_{i}}$.
If we let $\mathcal{K}$ be a metric signature with the same sorts
as $\mathcal{L}$ and predicate symbols for the $Q_{i}$, each with $\Delta_{Q_i} = \Delta$, then $\mathcal{K}$
is the required metric signature.

The procedure described in Lemma \ref{lem:concave} is uniformly computable, so passing from $\mathcal{L}$ to $\mathcal{K}$ is uniformly computable as well.
\end{proof}

\begin{prop}\phantomsection\label{prop:everything-lip}
  \begin{enumerate}[label=(\roman*)]
  \item If $\mathcal{L}$ is a countable metric signature, then it is interdefinable
with a $1$-Lipschitz metric signature $\mathcal{K}$, i.e.\ a signature
such that $\Delta_{P}(x)=x$ for all predicate symbols $P$ (although
not for metrics, which are necessarily $2$-Lipschitz). Furthermore $\mathcal{K}$ is uniformly computable from $\mathcal{L}$.
\item There is a $\mathcal{K}$-theory $T_\mathcal{L}$ such that the models of $T_\mathcal{L}$ are precisely the interpretations of $\mathcal{L}$-structures as $\mathcal{K}$-structures. Furthermore $T_\mathcal{L}$ is uniformly computable from $\mathcal{L}$.
  \end{enumerate}
\end{prop}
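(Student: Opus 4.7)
The plan is to chain the three preceding lemmas. First, apply the previous lemma to replace $\mathcal{L}$ with an interdefinable u.u.c.\ signature with a single common syntactic modulus $\Delta$. Second, apply Lemma \ref{lem:concave} to produce a uniformly computable continuous concave non-decreasing $\alpha:[0,1]\rightarrow[0,1]$ with $\alpha(0)=0$ and $\alpha\geq\Delta$. Third, slightly perturb $\alpha$ to make it \emph{strictly} increasing while preserving all of the above properties (for instance by replacing it with $\min\{\alpha(t)+\varepsilon t,1\}$ for sufficiently small $\varepsilon>0$, possibly retaking the concave hull if needed), so that $\alpha$ admits a uniformly computable continuous inverse on $[0,\alpha(1)]$.

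Now let $\mathcal{K}$ have the same sorts and predicate symbols as the u.u.c.\ refinement of $\mathcal{L}$, but with every $\Delta_{P}(x)=x$. Interpret each $\mathcal{L}$-structure $\frk{A}$ as a $\mathcal{K}$-structure $\frk{A}^{\mathcal{K}}$ by keeping all universes and predicate interpretations while replacing each metric $d$ by $d^{\ast}:=\max\{\alpha(d),d\}$. The immediately preceding lemma then guarantees that $d^{\ast}$ is a metric uniformly equivalent to $d$ and that every predicate is $1$-Lipschitz under $d^{\ast}$. For interdefinability, $d^{\ast}$ is a computable continuous function of $d$; conversely, setting $g(t):=\min\{t,\alpha^{-1}(t)\}$ (with $\alpha^{-1}$ extended by $\alpha^{-1}(t)=1$ for $t>\alpha(1)$), a routine case split on whether $\alpha(d)\leq d$ shows $d=g(d^{\ast})$, so $d$ is a definable predicate in $\frk{A}^{\mathcal{K}}$. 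Uniform computability is inherited from the preceding lemmas at each step.

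For part (ii), I would take $T_{\mathcal{L}}$ to be the $\mathcal{K}$-theory with the following closed axioms: (a) on each sort, the $\mathcal{K}$-definable predicate $\tilde{d}(x,y):=g(d^{\ast}(x,y))$ is a $[0,1]$-valued metric, i.e.\ satisfies the triangle inequality (symmetry, nonnegativity, and $\tilde{d}(x,x)=0$ being automatic from the defining formula); and (b) for each predicate symbol $P$, the formula $|P(\bar{x})-P(\bar{y})|\leq\Delta(\max_{i}\tilde{d}(x_{i},y_{i}))$ holds universally. These are uniformly computable closed formulas by the prior computability claims. Any model of $T_{\mathcal{L}}$ arises from an $\mathcal{L}$-structure by reinterpreting each sort with the metric $\tilde{d}$ while retaining the predicates: the axioms guarantee this is a legitimate u.u.c.\ $\mathcal{L}$-structure (which then corresponds to an $\mathcal{L}$-structure via the interdefinability from the first step), and by construction its image under $\frk{A}\mapsto\frk{A}^{\mathcal{K}}$ recovers the given $\mathcal{K}$-structure.

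The main obstacle is producing a single $\alpha$ with all the needed properties at once: concavity (so that $\alpha\circ d$ is a pseudo-metric), $\alpha\geq\Delta$ (so predicates become $1$-Lipschitz under $d^{\ast}$), strict monotonicity (so that $d$ is uniformly recoverable from $d^{\ast}$), and uniform computability from $\mathcal{L}$. Lemma \ref{lem:concave} alone does not guarantee strict monotonicity, which is why the small linear perturbation is essential; once such an $\alpha$ is in hand, the rest is routine verification of the metric and uniform-continuity axioms and tracking the uniform computability through the chain.
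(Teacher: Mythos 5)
Your overall architecture matches the paper's: pass to a u.u.c.\ signature, take the concave majorant $\alpha$ of the common modulus via Lemma \ref{lem:concave}, and replace $d$ by $\max\{\alpha(d),d\}$. The genuine gap is exactly the step you flag as the main obstacle: making $\alpha$ strictly increasing is \emph{impossible} in general. If the common modulus $\Delta$ attains the value $1$ at some $t_0<1$ --- which happens, e.g., whenever every $\Delta_{P_i}$ equals $\min\{2x,1\}$, so that $\Delta=\sum_i 2^{-(i+1)}\Delta_{P_i}=\min\{2x,1\}$ and $t_0=\frac{1}{2}$ --- then any $[0,1]$-valued majorant $\alpha\geq\Delta$ satisfies $\alpha(t)=1$ for all $t\in[t_0,1]$, and your perturbation $\min\{\alpha(t)+\varepsilon t,1\}$ is still identically $1$ there. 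Consequently $\max\{\alpha(d),d\}=1$ for every pair at distance $\geq t_0$: all large distances collapse, and the original metric is not recoverable (take all predicates constant; then two two-point structures at distances $0.6$ and $0.8$ map to isometric $\mathcal{K}$-structures, so there is no equivalence of categories). The paper does not attempt to invert $\alpha$ at all: before running the construction it adds, for each sort $O$, a new $1$-Lipschitz binary predicate $P_{d,O}$ interpreted as $\frac{1}{2}d_O$. Clipping then still occurs but is harmless, because the original metric survives as the definable predicate $2P_{d,O}$, and all subsequent axioms are phrased relative to $2P_{d,O}$ rather than the possibly clipped new metric.

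A secondary issue in (ii): even where your $g$ exists, your axiom scheme only enforces the common modulus $\Delta$ for the rescaled predicates $Q_i$, which is strictly weaker than requiring each original $P_i=2^{i+1}Q_i$ to obey its own $\Delta_{P_i}$ with respect to the recovered metric; models of your $T_\mathcal{L}$ therefore need not arise from $\mathcal{L}$-structures. This part is easily repaired by writing one uniform-continuity axiom per original predicate symbol against the recovered metric, which is what the paper does (with $2P_{d,O}$ playing the role of your $\tilde{d}$).
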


\begin{proof}
(i) Aside from what we have already outlined in this section, the only subtlety is that the passage from $d$ to $\max\{\alpha(d), d\}$ may delete some information contained in $d$ because of `clipping' wherever $\alpha$ is locally constant (and therefore not locally invertible). 
 To remedy this all we need to do is add, for each sort $O$, a new binary $1$-Lipschitz predicate symbol $P_{d,O}$  whose interpretation is $\frac{1}{2}d_O$ before running the construction in this section. This does not prevent $\alpha$ from clipping the metric, but we lose no information since we can recover the original metric from this predicate. 

 (ii) $T_\mathcal{L}$ just needs to express that every predicate symbol is uniformly continuous with regards to the original metrics $d_O = 2 P_{d,O}$ in the appropriate way, i.e.\ with axioms of the form
%
%
$$ \forall xy |P(x)-P(y)| \leq \Delta_P(2 P_{d,O}(x,y)) $$
and analogous axioms for predicates on more than one sort.
\end{proof}

\section{Encoding in Metric Spaces}


\noindent Most of the coding tricks used in the two following constructions boil down to the fact that if $X$ and $Y$ are metric spaces with diameter $\leq 1$, then for any $1$-Lipschitz function $f:X\times Y \rightarrow [0,1]$, you can extend the metrics on $X$ and $Y$ to $X \sqcup Y$ with $d(x,y)= 2 + f(x,y)$ for $x\in X$ and $y\in Y$. After doing this, if $X$ and $Y$ are definable from the metric, we can recover $f$ from the metric alone. The other fundamentally important thing is that since our metric structures have bounded diameter, we can add points at a larger diameter to ensure that they are $\varnothing$-definable in terms of the metric regardless of the content of the embedded metric structure.

For the sake of simplicity and to avoid writing a large number of fractions, we will write metrics with distances that are larger than 1. To bring this into line with the $[0,1]$-valued metric convention established at the beginning of the paper, divide all distances by $6$.

\begin{thm}\phantomsection\label{thm:main2}
  \begin{enumerate}[label=(\roman*)]
  \item If $\mathcal{L}$ is a countable metric signature, then for any $C_{\mathcal{L},r}$, there is a uniformly
definable imaginary $X$ such that for any $\mathfrak{A}\in C_{\mathcal{L},r}$,
$\mathfrak{A}$ and the purely metric reduct $X_{0}^{\mathfrak{A}}=(X^{\mathfrak{A}},d)$
are uniformly bi-interpretable in the sense that
\begin{itemize}
\item there are uniformly definable imaginary expansions of $\mathfrak{A}$
and $X_{0}^{\mathfrak{A}}$ which are uniformly interdefinable, and
\item there are uniformly definable bijections between the sorts of $\mathfrak{A}$
and definable subsets of $X_{0}^{\mathfrak{A}}$, and $X_{0}^{\mathfrak{A}}$
is contained in the definable closure of the images of those bijections.
\end{itemize}

Furthermore the interpretation preserves embeddings and (uniform) $d$-finiteness of types. If the original structure is not strongly infinite dimensional, then the interpreted structure will also not be strongly infinite dimensional. The interpretation preserves local compactness and local finite dimensionality away from a fixed compact $\varnothing$-definable set of bad points.
\item For any countable metric signature
$\mathcal{L}$, there is a first-order theory $T_{\mathcal{L}}$ and
a sentence $\Xi$ such that for any $r\in(0,1]$, the class of metric
spaces of the form $X_{0}^{\mathfrak{A}}$ for $\mathfrak{A}\in C_{\mathcal{L},r}$
is precisely the set of models of $T_{\mathcal{L}}\cup\{ \Xi \geq r\}$.
If $\mathcal{L}$ is a computable signature, then $T_\mathcal{L}$ is computable.
$\Xi$ does not depend on $\mathcal{L}$ and is always computable.

Furthermore there are computable mappings of presentations of $\mathcal{L}$-structures to presentations of models of $T_\mathcal{L}$ and vice versa (these mappings do not depend on $r$). 
  \end{enumerate}
\end{thm}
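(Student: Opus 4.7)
The plan is to chain the preceding reductions into a single normal form and then directly code the resulting structure as a pure metric space. First, by applying the reduction to relational $[0,1]$-valued signatures together with Lemma \ref{lem:prod}, Proposition \ref{prop:cmdu}(v), and Proposition \ref{prop:everything-lip} in succession, I would replace $\mathcal{L}$ by a u.u.c., $1$-Lipschitz, single-sorted signature $\mathcal{K}$ with a countable family of unary and binary $[0,1]$-valued predicates $\{P_n\}_{n<\omega}$, under which $C_{\mathcal{L},r}$ corresponds to the class of models of a computable $\mathcal{K}$-theory $T_{\mathcal{K}}$ with a distinguished home sort $M$ of diameter $\geq r$. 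Every ingredient of this reduction is a uniformly definable imaginary expansion and is uniformly computable, and each reduction supplies its own axiom set, which will become part of $T_{\mathcal{L}}$.

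With the normal form fixed, I would build $X^{\mathfrak{A}}$ by stacking several layers on top of $M^{\mathfrak{A}}$ (dividing the whole metric by $6$ at the end, as convened just before the theorem). The level-$0$ layer is an isometric copy of $M^{\mathfrak{A}}$. For each binary $P_n$ I attach a second, internally isometric copy $M_n^{\mathfrak{A}}$ of $M^{\mathfrak{A}}$ via a bijection $\phi_n\colon M^{\mathfrak{A}}\to M_n^{\mathfrak{A}}$, with cross-distances
\[
d_X(x,\phi_n(y))=2+P_n^{\mathfrak{A}}(x,y),
\]
and each unary $P_n$ is coded by a single extra point $p_n$ with $d_X(x,p_n)=2+P_n^{\mathfrak{A}}(x)$. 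Finally, at distance at least $4$ from everything else, I add a $\varnothing$-definable compact set $A$ of anchor points whose internal geometry is a fixed computable compact metric configuration obtained from Lemma \ref{lem:comp-im}, with distances to the $M_n^{\mathfrak{A}}$ and $p_n$ reading off the index $n$. The coding trick quoted in the excerpt, together with the $1$-Lipschitz hypothesis on the $P_n$ in the max product metric, makes the triangle-inequality checks routine.

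The bi-interpretability in part (i) is obtained by reading the construction backwards from the pure metric: Lemma \ref{lem:comp-im} renders $A$ a $\varnothing$-definable imaginary of $X_0^{\mathfrak{A}}$; the home copy of $M^{\mathfrak{A}}$ and each $M_n^{\mathfrak{A}}$ (resp.\ each $p_n$) are $\varnothing$-definable by their prescribed distances to a distinguished anchor, as are the $\phi_n$; and the predicates are recovered uniformly by $P_n^{\mathfrak{A}}(x,y)=d_X(x,\phi_n(y))-2$ and its unary analog. Preservation of embeddings is built into the functoriality of the assembly. Preservation of (uniform) $d$-finiteness, local compactness, and local finite-dimensionality away from $A$ all follow because the coding restricts to bi-Lipschitz bijections on each piece of $X^{\mathfrak{A}}$, just as in Proposition \ref{prop:not-bad}, so that $A$ becomes the compact set of `bad' points in the statement. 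For part (ii) I would take $T_{\mathcal{L}}$ to be the conjunction of (a) closed-formula axioms asserting that $A$ is the fixed $\varnothing$-definable compact configuration, (b) closed-formula axioms placing the remaining points into the prescribed layers $M, M_0, M_1,\dots, p_0, p_1,\dots$ at the prescribed inter-layer distances, and (c) the translated axioms of $T_{\mathcal{K}}$ on $M$ via the recovered predicates. The sentence $\Xi$ is simply $\sup_{x,y \in M} d(x,y)$ read off through the coding, which does not mention any predicate of $\mathcal{L}$, and its computability is immediate. Computability of $T_{\mathcal{L}}$ is inherited from the uniform computability of every ingredient.

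The main obstacle I expect is engineering the anchor configuration and the radial levels in one combined metric so that all the competing constraints hold simultaneously: the distances have to fit inside a uniformly bounded range; the triangle inequality must hold despite the $P_n$ being only $1$-Lipschitz rather than strictly contracting; every layer has to be $\varnothing$-definable from the metric alone, regardless of what $M^{\mathfrak{A}}$ looks like (which is where the positivity of $r$ combined with Lemma \ref{lem:comp-im} is essential, just as in the analogous `more than one element' hypothesis for Fact \ref{fact:main-fact}); and the whole assembly has to be effectively uniform in $\mathcal{L}$. None of these individual constraints is hard in isolation, but pinning down a single concrete assembly that verifies all of them and interacts cleanly with the preservation clauses (embeddings, $d$-finiteness, dimension) is the real bulk of the work.
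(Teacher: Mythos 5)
Your normal-form reduction and the general shape (layered metric coding plus a far-away anchor/tag making each layer $\varnothing$-definable) match the paper, but the central coding step has a genuine gap. You attach, for each of the countably many binary predicates $P_n$, a full-size ``internally isometric'' copy $M_n^{\mathfrak{A}}$ of $M^{\mathfrak{A}}$ with cross-distance $d_X(x,\phi_n(y))=2+P_n^{\mathfrak{A}}(x,y)$. With infinitely many such full-scale copies, the sequence of layers cannot be controlled by any theory: by compactness (exactly the phenomenon the paper's Section on countable metric disjoint unions is built to defeat), models of your $T_{\mathcal{L}}$ will contain points realizing limits of the types ``$x\in M_n$'' as $n\to\infty$ that correspond to nothing in $\mathfrak{A}$, so part (ii) fails. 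The only way to kill these rogue points is to make the layers accumulate metrically, i.e.\ shrink the $n$-th copy to diameter $O(2^{-n})$ and let them telescope onto a single overflow point. But once $M_n$ is shrunk, your formula $d_X(x,\phi_n(y))=2+P_n(x,y)$ violates the triangle inequality: it forces $|P_n(x,y)-P_n(x,z)|\leq d_X(\phi_n(y),\phi_n(z))=2^{-n}d(y,z)$, i.e.\ $P_n$ would have to be $2^{-n}$-Lipschitz, which $1$-Lipschitzness does not give. So your construction is caught between non-axiomatizability and failure of the triangle inequality, and this is precisely the ``engineering'' obstacle you flag at the end without resolving.

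The paper's resolution is the missing idea: the auxiliary copies form a single tower $A\times\omega$ with level $n$ scaled by $2^{-n}$ and converging to one overflow point $\infty$, the cross-distance from the base copy $A$ to level $n$ is the \emph{predicate-free} $2+2^{-n-1}d(x,y)$ (used only to define the bijections $f_n$), and the predicate $P_n$ is encoded in the distance between \emph{consecutive} levels, $d((x,n),(y,n+1))=2^{-n-1}(1+P_n(x,y))$, where the prefactor $2^{-n-1}$ matches the internal scales of both levels so that $1$-Lipschitzness of $P_n$ is exactly what the triangle inequality needs. Unary predicates are simply recast as binary ones ($P(x,y)=P(x)$) rather than coded by isolated points, which sidesteps the analogous accumulation problem for your $p_n$'s. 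Your recovery of $t$ and of the layers from their distances to $t$, and your appeal to the argument of Proposition \ref{prop:not-bad} for the preservation clauses, are fine once the layers actually exist as definable sets; it is the assembly of the metric itself that needs the telescoping construction.
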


\begin{proof}
(i) By applying Lemma \ref{lem:prod}, we may assume that $\mathcal{L}$ has a uniform arity bound of 2. By applying Propositions \ref{prop:one-sort} and \ref{prop:everything-lip}, we may assume that $\mathcal{L}$ has a single sort and is $1$-Lipschitz. By recasting unary predicates $P$ as binary predicates using $P(x,y)=P(x)$, we may assume that all predicates are binary.

Let $\{P_n\}_{n<\omega}$ be an enumeration of all predicates with $P_0(x,y) = \frac{1}{2}d(x,y)$.

$X^\mathfrak{A}$ will have the set $A \sqcup A\times \omega \sqcup\{\infty,t\}$ as its universe, where $A\times \omega \sqcup \{\infty\} $ will be a modified countable metric disjoint union, with overflow point $\infty$, and $t$ will be a tag to keep things straight. $X^\mathfrak{A}$ will have the unique metric defined by

\begin{itemize}
\item $d(x,y)=d^\mathfrak{A}(x,y)$ for $x,y\in A$,
\item $d(x,(y,n))=2+2^{-n-1}d^\mathfrak{A}(x,y)$ for $x\in A$ and $(y,n)\in A \times \omega$,
\item $d(x,\infty)=2$ for $x \in A$,
\item $d(x,t)=5$ for $x\in A$,

\item $d((x,n),(y,n))=2^{-n}d^\mathfrak{A}(x,y)$ for $(x,n),(y,n)\in A \times \omega$,
\item $d((x,n),(y,n+1))=2^{-n-1}(1+P_n^\mathfrak{A}(x,y))$ for $(x,n),(y,n+1)\in A \times \omega$,
\item $d((x,n),(y,m))=|2^{-n}-2^{-m}|$ for $(x,n),(y,m)\in A \times \omega$ with $|n-m|>1$,

\item $d((x,n),\infty)=2^{-n}$,
\item $d((x,n),t)=4+2^{-n-1}$ for $(x,n)\in A \times \omega$, and
\item $d(\infty,t)=4$.

\end{itemize}
All of the metric space axioms except for the triangle inequality are clearly obeyed by $d$. If all three points are in the same copy of $A$ then the triangle inequality is obeyed, so we only need to check mixed triples. The majority of cases are mechanical to check, but there are a handful of tight or subtle cases that we will write out explicitly. Let $x,y\in A$ and $(z,n),(w,n),(u,n+1),(v,n+1),(s,m)\in  A\times \omega$ with $|n-m|>1$, where $n,m <  \omega$. Also, recall that if $n\neq m$, then $|2^{-n}-2^{-m}|\geq 2^{-n-1}$. Here are the cases we check explicitly:
\begin{itemize}
\item $d(x,(z,n)) \leq  2 + 2\cdot2^{-n-2}\leq d(x,(u,n+1)) + d((u,n+1),(z,n))$
\item $d((z,n),(u,n+1))\leq 2^{-n-1}(1+P_n(w,u)+d(z,w))\\ \leq d((z,n),(w,n))+d((w,n),(u,n+1))$
\item $d((z,n),(w,n)) \leq 2\cdot2^{-n-2} \leq d((z,n),(u,n+1)) + d((u,n+1),(w,n))$
\item $d((z,n),(v,n+1)) \leq 2^{-n-1}(1+P_n(z,u)+d(u,v)) \\ = d((z,n),(u,n+1))+d((u,n+1),(v,n+1))$
\item $d((z,n),t) = 4 + 2 \cdot 2^{-n-2} \leq d((z,n),(u,n+1))+d((u,n+1),t)$
\end{itemize}
 Just as in the proof of Proposition \ref{prop:cmdu}, let $Y={0}\cup\{2^{-n}:n<\omega\}$ and let $Q:Y\rightarrow [0,1]$ be the natural inclusion map, which is a definable predicate on $Y$. For each $n$, let
$$ R_n(x)= 1 \dotdiv 2^{n+1}|Q(x)-2^{-n}|,$$
and define a pseudo-metric, $\rho$, on $Y\times A$ by
\begin{align*}
\beta_n(\overline{x},\overline{y}) &= R_n(x_0)R_{n+1}(y_0) P_n (x_{n+1},y_{n+2}) + R_{n+1}(x_0) R_n(y_0) P_n(y_{n+1},x_{n+2})\text{ and} \\
  \rho(\overline{x},\overline{y}) &=|Q(x_0)-Q(y_0)| +  \sum_{n<\omega} 2^{-n} \left(R_n(x_0)R_n(y_0) d(x_{n+1},y_{n+1}) +\frac{1}{2} \beta_n(\overline{x},\overline{y}) \right).
\end{align*}
Then $Y\times A/\rho$ will correspond to $A\times \omega \sqcup \{\infty\} $, where $\infty$ is the $\rho$-equivalence class of any element of the form $\left<0,x\right>$ for $x\in A$.

Recall that an element or set is definable if there is a formula which defines its distance predicate. If we have a $\{0,1\}$-valued indicator function, $\varphi(x)$, for the set $\varphi^{-1}(0)$, then that is even better and we can always define the distance to the set by $d(x,\varphi^{-1}(0))=\inf_y d(x,y) + 5\varphi(y)$ if we need it. Once a point is definable, we will freely use it as a constant to make the following formulas simpler \cite[Proposition 9.18]{MTFMS}. 

First note that the formulas 
\begin{align*}
  C(x) &= \forall y d(x,y) = 0 \vee d(x,y) \geq 4\text{ and} \\
  O(x) & = \sforall y d(x,y) < 1 \vee d(x,y) > 3
\end{align*}
are both satisfied if and only if $x=t$ because $t$ is the only point for which there is no $y$ with $1\leq d(t,y) \leq 3$. This implies that $\mathrm{tp}(t)$ is topologically isolated and so $\{t\}$ is a definable singleton. Therefore we can use it as a constant to define distance predicates for $A$ and each $A \times \{n\}$:
\begin{align*}
 d(x,A)&= \inf_y d(x,y) + 2 |d(y,t) - 5|\text{ and} \tag{$\dagger$}\\
d(x,A\times\{n\}) &= \inf_y d(x,y) + 2 |d(y,t)-(4+2^{-n-1})|. \tag{$\dagger$}
\end{align*}
These formulas are distance predicates by our choice of distances to $t$. 
$|d(y,t) - 5|$ and $|d(y,t)-(4+2^{-n-1})|$ roughly give the distances to $A$ and $A\times\{n\}$ and then the method used in the proof of Proposition 9.19 in \cite{MTFMS} gives an exact distance predicate.


For each $n<\omega$, there is a definable bijection from $A$ to $A\times\{n\}$ given by
\begin{equation}
d(y,f_n(x))= 2^{n+1}(d(x,y)\dotdiv 2), \tag{$\circ$}
\end{equation}
and so for any $n<\omega$, we can define $P_n$ on $A$ by
\[
P_n(x,y) = 2^{n+1}(d(f_n(x),f_{n+1}(y)) \dotdiv 1).
\]
So $X$ is the required uniformly definable imaginary, which clearly preserves embeddings. The interpretation preserves (uniform) $d$-finiteness of types, lack of strong infinite dimensionality, local compactness, and local finite dimensionality by the same argument as in the proof of Lemma \ref{prop:not-bad} (specifically, the inclusion maps are open isometries-up-to-scaling).

The advertised set of bad points is $\{\infty,\ast\}\cup \{(\ast,n) : n <\omega\}$. 
 Since this is a closed compact set of $\varnothing$-definable points, it is algebraic over $\varnothing$.

(ii) $T_\mathcal{L}$ is a theory in the language of metric spaces of diameter $5$. By Lemma~\ref{lem:prod} and Propositions \ref{prop:cmdu} and \ref{prop:everything-lip}, we only need to construct $T_\mathcal{L}$ in the case where $\mathcal{L}$ has one sort and is 1-Lipschitz.


$T_\mathcal{L}$ contains the axioms
\[
  \wexists x \forall y  (x=y \vee d(x,y) \geq 4)\text{ and} 
\]
\[
  \forall xy  \left( \sforall z( d(x,z) < 1 \vee d(x,z) > 3 ) \wedge \sforall z( d(x,z) < 1 \vee d(x,z) > 3 )  \to x=y \right),
\]
i.e.\ there is a unique element $x$ with the property that every distance to it is either less than $1$ or greater than $3$, and, furthermore, this element actually has the property that every distance to it is either $0$ or at least $4$.


Since there is an open formula satisfied by a single element, it is actually definable as a singleton. Let $t$ denote that element for the sake of making the following axioms simpler to write down. Let $f:[0,5]\rightarrow[0,1]$ be a computable total continuous function whose zeroset is precisely $Z = \{4 + 2^{-n-1} : n<\omega\} \cup \{4,5 \} $. $T_\mathcal{L}$ has the axioms
\begin{align*}
  \forall x & f(d(x,t)) = 0\text{ and} \\
  \wexists x & d(x,t) = r\text{ for all }r \in Z,
\end{align*}
i.e.\ distances to $t$ are always in $Z$, and every distance in $Z$ is attained in some elementary extension. For isolated points in $Z$ (everything other than $4$), this implies that the distances are attained in every model.

$T_\mathcal{L}$ also has axioms
\begin{align*}
  \D x & ( \inf_y d(x,y) + 2|d(y,t) - 5| )\text{ and}\\
  \D x & ( \inf_y d(x,y) + 2|d(y,t) - (4+2^{-n-1})|  )\text{ for each }n<\omega,
\end{align*}
which assert that the formulas $(\dagger)$ are distance predicates of definable sets (specifically, $A$ and the $A \times \{n\}$). We will now write these formulas as $d(x,A)$ and $d(x,A_n)$, respectively, and refer to the corresponding definable sets as $A$ and $A_n$.
We also need to actually assert that these functions are isometries-up-to-scaling, which can be done with
\begin{align*}
  (\forall x_0,x_1 \in A) (\forall y_0,y_1 \in A_n) & |d(x_0,x_1) - 2^n d(y_0,y_1)| \\ & \leq 2^{n+3} [ (d(x_0,y_0)\dotdiv 2)\allowbreak + (d(x_0,y_1)\dotdiv 2) ] .
\end{align*}


We need axioms enforcing the definition of $d$ given in part (i) of this proof other than the line involving $P_n$ (which isn't determined by $T_\mathcal{L}$) and lines involving $\infty$ (which are automatically enforced by continuity). The distances between $A$ and $A_n$ are already enforced by the previous axioms. We need
\begin{align*}
(\forall x \in A) & d(x,t) = 5, \\ 
(\forall x \in A_n) (\forall y \in A_m) & d(x,y) = |2^{-n} - 2^{-m}|,\text{ and} \\  
(\forall x \in A_n) & d(x, t) = 4+2^{-n-1}. 
\end{align*}
For the $P_n$ line we just need to enforce the lower bound of $2^{-n-1}$ and the upper bound of $2^{-n}$ and to ensure that $P_n$ (which is definable from $d$ since we can define the sets $A_n$) obeys the correct modulus of uniform continuity (relative to the predicate $2 P_0$, since the metric itself may have lost information to clipping). This is accomplished by
\begin{align*}
  (\forall x \in A_n)&(\forall y \in A_{n+1})  2^{-n-1}\leq d(x,y) \leq 2^{-n}\text{ and} \\
  (\forall x_0,x_1,y_0,y_1 \in A)& |P_n(x_0,x_1) - P_n(y_0,y_1)| \leq \Delta_{P_n}(2\max\{P_0(x_0,y_0), P_0(x_1,y_1)\}).
\end{align*}

 For those predicate symbols that were originally unary we need axioms enforcing that $P_n$ only depends on one input, namely 
 \[
(\forall xy_0y_1 \in A) P_n(x,y_0)=P_n(x,y_1)
 \]
 for each unary $P_n$.

 The existence of $\infty$ and its definability are implied by these other axioms (since the $A$ form a Cauchy sequence of definable sets in the Hausdorff metric whose diameters are limiting to $0$ and a Hausdorff metric limit of definable sets is definable). Finally $\Xi$ is given by
 \[
  \Xi = \sup\limits_{x,y \in A} d(x,y), 
 \]
 which evaluates to the diameter of the set $A$.
\end{proof}

Assuming that the signature has finitely many sorts and a uniform arity bound (but maybe infinitely many predicate symbols) we can avoid the bad points entirely, but the construction is different. It is somewhat less delicate than the construction in Theorem \ref{thm:main2}, so  we'll only sketch the important specifics. 

\begin{thm} \label{thm:main3}
If $\mathcal{L}$ is a countable metric signature with finitely many sorts and a uniform arity bound, then the result of Theorem \ref{thm:main2} holds with no bad points, i.e.\ the bi-interpretation preserves local compactness and finite dimensionality everywhere.
\end{thm}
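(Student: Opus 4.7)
The plan is to adapt the construction in Theorem \ref{thm:main2} by replacing its shrinking-cone structure, whose apex $\infty$ is a bad point, with a cylinder-like structure whose ``limit layer'' is a full isometric copy of the base sort rather than a collapsed point. The key observation is that if $C=\{0\}\cup\{2^{-n}:n<\omega\}$ is equipped with its standard metric, then $C$ is compact and $0$-dimensional, so the product $A \times C$ has exactly the same local topological dimension as $A$ at every point, and inherits local compactness of $A$ at every point.

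First I would carry out preliminary reductions while avoiding the countable-sort machinery that introduces the $\ast$-type bad points. Since $\mathcal{L}$ has finitely many sorts, Lemma \ref{lem:one-sort-fin} (in place of Proposition \ref{prop:one-sort}, which uses the countable metric disjoint union) merges them into a single sort without creating any overflow point. A further application of Lemma \ref{lem:prod} reduces the uniform arity bound to $2$ via finite product sorts, and Proposition \ref{prop:everything-lip} makes all predicates $1$-Lipschitz. So we may assume a single sort $A$ with $1$-Lipschitz binary predicates $\{P_n\}_{n<\omega}$ and $P_0 = \frac{1}{2}d_A$.

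Next I would construct
\[
X^{\mathfrak{A}} = A \sqcup (A \times C) \sqcup \{t\},
\]
giving each slice $A\times\{c\}$ the full scale of $A$ (rather than the shrunken scale $2^{-n} d_A$ used in Theorem \ref{thm:main2}), encoding $P_n$ in the distance between adjacent slices $A\times\{2^{-n}\}$ and $A\times\{2^{-n-1}\}$ by a formula of the form $(\text{vertical gap}) + 2^{-n-2}(1+P_n(x,y))$ added to the horizontal $d_A$-contribution, and making each slice together with $A$ itself definable by pinning down its distance to the tag $t$ exactly as in the proof of Theorem \ref{thm:main2}. The limit slice $A \times \{0\}$ is then a full isometric copy of $A$ to which the slices $A \times \{2^{-n}\}$ converge in the Hausdorff sense as $n \to \infty$; it is no longer a single collapsed point. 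Definability of the projections $A \times \{c\} \to A$ and recovery of each $P_n$ from the adjacent-slice distances then proceed exactly as in Theorem \ref{thm:main2}, with formulas analogous to $(\dagger)$ and $(\circ)$.

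Finally I would verify that no bad points remain. Since $C$ is compact and $0$-dimensional, $A \times C$ has the same local dimension and same local compactness as $A$ at every point, and $A$ and $\{t\}$ cause no trouble on their own. Preservation of embeddings, $(\text{uniform})$ $d$-finiteness of types, and lack of strong infinite dimensionality is inherited from the corresponding argument in the proof of Theorem \ref{thm:main2}. The main technical obstacle is tuning the scale of the $P_n$-correction so that the triangle inequality holds uniformly in $n$ and the resulting metric satisfies the Lipschitz bounds needed for predicates to be definable from it; this is the same balancing act as in the proof of Theorem \ref{thm:main2}, now with additive rather than multiplicative corrections, and needs to be verified case-by-case across the handful of mixed triples of points.
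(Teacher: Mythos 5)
Your reduction to a single sort with $1$-Lipschitz binary predicates is fine, and replacing the collapsing cone of Theorem~\ref{thm:main2} by a ``cylinder'' $A\times C$ is a natural idea, but the metric you propose on it is not a metric, and the failure is structural rather than a matter of constants. Write $\epsilon_n$ for the scale of the $P_n$-correction between adjacent slices and consider the path $(x,2^{-n})\to(y,2^{-n})\to(z,2^{-n-1})$. Since each slice carries the metric $d_A$ exactly and the cross-slice distance is $d_A(x,z)$ plus a constant plus $\epsilon_n P_n(x,z)$, the triangle inequality demands
\[
\epsilon_n\bigl(P_n(x,z)-P_n(y,z)\bigr)\;\leq\; d_A(x,y)+d_A(y,z)-d_A(x,z).
\]
The right-hand side vanishes whenever $y$ lies on a geodesic from $x$ to $z$, while $1$-Lipschitzness of $P_n$ still permits $P_n(x,z)-P_n(y,z)>0$ there (e.g.\ $A=[0,1]$, $x=0$, $y=\tfrac12$, $z=1$, $P_n(u,v)=(2-u-v)/2$); so no choice of $\epsilon_n>0$ works. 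This is exactly why Theorem~\ref{thm:main2} shrinks the $n$-th slice by $2^{-n}$: a predicate of full range can only be written into the distance between two definable copies of $A$ at a scale comparable to their separation, and your slices must have separation tending to $0$ for them to converge to the limit layer $A\times\{0\}$. (Clipping instead, say via $\max\{d_A(x,z),\,2^{-n-1}(1+P_n(x,z))\}$, does yield a metric but destroys the information: $P_n(x,z)$ is then recoverable only for $d_A(x,z)\leq 2^{-n-1}$.) The construction could likely be rescued by also dilating the within-slice metrics to $(1+\delta_n)d_A$ with $\epsilon_n\leq\delta_{n+1}$ and $\delta_n\leq 2^{-n}$, but that is an additional idea your sketch does not contain and whose verification is the entire content of the proof.

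The paper's actual proof avoids any infinite tower of copies of $A$, and it is worth seeing how the hypotheses enable this. With finitely many sorts and a uniform arity bound there are only finitely many product sorts $N_m=\prod_{O\in a(P)}O$ to consider; the paper keeps one full-scale copy of each (plus copies of their factors, linked by projection edges at separation $2$), and encodes the countably many predicates on $N_m$ not in distances between large slices but as distances from points of $N_m$ to the individual points of one compact auxiliary set $I_m\cong\{0\}\cup\{2^{-s}:s<\omega\}$, roughly $d(\bar{x},2^{-n})=2+2^{-n}P_n(\bar{x})$. There the separation is $2$, so the triangle inequality has ample slack, and the only accumulation occurs inside the compact zero-dimensional sets $I_m$, all of whose points are $\varnothing$-definable; every other point lies in a finite product of sorts, so local compactness and local finite dimensionality are preserved everywhere. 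Your observation that $C$ is compact and zero-dimensional is the right one---the paper simply attaches that compact set as a far-away index set rather than as the vertical axis of a cylinder.
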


\begin{proof}
By applying Proposition \ref{prop:everything-lip}, we may assume that $\mathcal{L}$ is $1$-Lipschitz. Let $\{O_n\}_{n<k}$ be a finite list of all base sorts and let $\{N_n\}_{n<\ell}$ be a finite list of all finitary product sorts of the form $\prod_{O\in a(P)}O$ for some predicate symbol $P$. The sort $X$ will be constructed from a graph with the following nodes:
\begin{itemize}
\item For each $n<k$, a main copy of the sort $O_n$.
\item For each $n<\ell$, a copy of $N_n=\prod_{O\in a(P)}O$ along with copies of each $O$ in $a(P)$ (with multiplicity).
\item For each $n<\ell$, a copy of $I=\{0\}\cup\{2^{-s} : s<\omega\}$.

\end{itemize}

Connections between the nodes will correspond to specific relationships being encoded in the metric.
\begin{itemize}
\item For each main copy of $O_n$ and each copy of $O_n$ associated to some $N_m$ there is an edge. Call the associated copy $O^\prime_n$. The metric between $x\in O_n$ and $y \in O^\prime_n$ will be given by $d(x,y)=2+d_{O_n}(x,y)$, in order to encode a definable bijection between $O_n$ and $O^\prime_n$.
\item For each $N_m$ and associated $O^\prime_n$ there is an edge. If $O^\prime_n$ is the $i$th factor of $N_m$, then the metric between $\overline{x} \in N_m$ and $y\in O^\prime_n$ will be given by $d(\overline{x},y)=2+d_{O_n}(x_i,y)$, in order to encode a definable projection from $N_m$ to $O_n^\prime$.
\item For each $N_m$ and its associated copy $I_m$ of $I$ there is an edge. Let $\{P_n\}_{n<\omega}$ be a list of the predicates symbols on $N_m$. If $\overline{x}\in N_m$ and $2^{-n}\in I$, then $d(\overline{x},2^{-n})=2+2^{-n}P_n(\overline{x})$ and $d(\overline{x},0)=2$. (This is where it's important that the predicate symbols be $1$-Lipschitz. If $P_n$ is not $1$-Lipschitz, this formula cannot define a metric).
\end{itemize}

Let all other distances be 4. Finally add a single new point $t$, with distances to everything else between $5$ and $6$ chosen to make each node of the graph have a $t$-definable indicator function. Then using the same kind of formula as in the proof of Theorem \ref{thm:main2}, $t$ is $\varnothing$-definable, so each of the nodes in the graph is definable as well.

Note that for any function $\eta(x)$ taking on $0$ on some copy of $I$ and $1$ everywhere else, the formula
$$ \min\left\{   \eta(x) + 8 \sup_y \min\left\{   d(x,y) , \frac{1}{2} \dotdiv d(x,y)  \right\}, 1\right\} $$
is $\{0,1\}$-valued and takes on the value $0$ if and only if $x$ is $1$ (as an element of $I$). Therefore we can use $1\in I$ as a constant, and for each $n\leq \omega$, we can define a distance predicate for $2^{-n}\in I$ (with $2^{-\omega}=0$) by
$$d(x, 2^{-n}) = \inf_y d(x,y)+2|d(y,1)-(1-2^{-n})|.$$ 
So each point in each copy of $I$ is $\varnothing$-definable.

Every point in $X^\mathfrak{A}$ is either an image of some Cartesian product of sorts in $\mathfrak{A}$ or contained in a compact clopen definable set (either a copy of $I$ or $t$). Finitary products preserve local compactness and local finite dimensionality, so in this construction there are no `bad points.' 
\end{proof}




\subsection['Finite Axiomatizability' in Continuous Logic]{`Finite Axiomatizability' in Continuous Logic} \label{sec:fin}

The notion of finite
axiomatizability is somewhat awkward in continuous logic. There are several possible definitions that suggest themselves, but none of them seem useful. This is the most literal transcription of the ordinary definition:

\begin{defn}[Finite axiomatizability version 1]
A theory $T$ is \emph{finitely axiomatizable} if and only if it is axiomatized by a finite collection of sentences.
\end{defn}

Depending on what we mean by `sentence,' every theory in a countable language is finitely axiomatizable in that continuous logic naturally has an infinitary conjunction of the form $\Sigma_{n<\omega}2^{-n} \varphi_{n}$, and we can just let $\varphi_n$ be an enumeration of a countable dense subset of the logical consequences of $T$.

A sensible attempt to avoid this would be a definition like this:

\begin{defn}[Finite axiomatizability version 2]
A theory $T$ is \emph{finitely axiomatizable} if and only if it is axiomatized by a finite collection of restricted sentences.
\end{defn}

But this is  arbitrary and fails to have any obvious meaningful semantic consequences.

We can try a more directly semantic definition like this:

\begin{defn}[Finite axiomatizability version 3]
A theory $T$ is \emph{finitely axiomatizable} if and only if the class of models of $T$ and its complement are both elementary.
\end{defn}

This amounts to saying $[T] = \{T^\prime \in S_0(\varnothing):T^\prime \vdash T,T^\prime \text{ a complete theory}\}$ is a clopen subset of $S_0(\varnothing)$. The problem is that for any reasonable\footnote{If all function symbols in $\mathcal{L}$ have concave moduli of continuity, then $S_0(\varnothing)$ can be continuously retracted to a point by scaling all non-metric relations to 0 and then scaling the metric to 0. On the other hand, if $\Delta_f(x)=x^2$ and the metric has diameter $\leq 1$, then the sentence $$\inf_x d(x,f(x))$$ can only take on the values 0 or 1. Either there exists some $x$ such that $d(x,f(x)) < 1$, in which case $x,f(x),f(f(x)),\dots$ converges to a fixed point of $f$, or for every $x$, $d(x,f(x))=1$.} metric signature, $S_0(\varnothing)$ is connected, so the only finitely axiomatizable theories are the trivial theory and the inconsistent one. That said, `finite axiomatizability version 3' relative to a theory can be non-trivial.

At this point we could argue that clopenness in type space is too strong of a condition in continuous logic. Definable sets do not correspond to clopen subsets of type space, but rather have a more subtle topometric characterization in terms of the $d$-metric: A closed set $D\subseteq S_n(T)$ is definable if and only if $D\subseteq \text{int}\{p\in S_n(T):d(p,D)<\varepsilon\}$ for every $\varepsilon > 0$, where $\text{int} X$ is the topological interior of $X$. By analogy we could try a similar weakening of clopen as a basis for our definition of `finite axiomatizability,' but the $d$-metric relies on $T$ being a complete theory and for a complete theory $S_0(T)$ is trivial.

There are, however, contexts in which there is a meaningful non-trivial metric on $S_0(T)$ for an incomplete theory $T$. Specifically if we're examining a notion of approximate isomorphism (such as the perturbations in \cite{OnPert} or Gromov-Hausdorff distance), we get a metric on completions of $T$---
$$\rho(T_0, T_1) = \inf \{ \varepsilon : \mathfrak{A}\models T_0, \mathfrak{B}\models T_1,\mathfrak{A},\mathfrak{B}\,\text{`}\varepsilon\text{-isomorphic'} \}$$
---whatever `$\varepsilon$-isomorphic' might mean. And in this case we get a weaker notion of finite axiomatizability:

\begin{defn}[Finite axiomatizability version 4]
A theory $T$ is \emph{finitely axiomatizable relative to $\rho$}  if there is a sentence $\chi$ such that $T \vdash \chi$ and for all complete theories $T^\prime$, $T^\prime \vdash \chi \leq \rho(T^\prime, [T])$, where $\rho(T^\prime,[T])$ is the point-set distance between $T^\prime$ and $[T]$.
\end{defn}

This definition is equivalent to the topometric condition $[T]\subseteq \text{int}\{T^\prime \in S_0(\varnothing):\rho(T^\prime,[T])<\varepsilon\}$ for every $\varepsilon > 0$. It should be noted that this is a proper generalization of version 3 in that we can take our notion of approximate isomorphism to be $\mathfrak{A}$ and $\mathfrak{B}$ are $0$-isomorphic if they are isomorphic and $1$-isomorphic if they are not.

This may be a reasonable definition in some context, although as discussed in \cite{OnPert} the metrics $\rho$ are generally much more poorly behaved than the $d$-metric. In any case it's unclear what one can do with this definition. To apply it to this paper we would need to choose a notion of approximate isomorphism before we could even ask the question of whether or not the theory $T_{\mathcal{L}}$ is `finitely axiomatizable.'


\bibliographystyle{plain}
\bibliography{../ref}

\end{document}